\newcommandx{\newnotion}[2][2=\empty]{\emph{#1}\ifx#2\empty\index{#1}\else\index{#2}\fi}
\newcommand{\defeq}{\coloneqq}
\newcommandx{\set}[2][2=\empty]{\{#1\ifx#2\empty\else\mid#2\fi\}}
\newcommandx{\seq}[2][2=\empty]{{(#1\ifx#2\empty\else:#2\fi)}}
\newcommand{\subspeq}{\leq}
\newcommandx{\gensubsp}[2][2=\empty]{\langle#1\ifx#2\empty\else\mid#2\fi\rangle}
\newcommand{\finfield}{\mathbb{F}}
\newcommand{\rest}[1]{\left.#1\right\rvert} 
\newcommand{\setmeet}{\cap}
\newcommandx{\gensubgrp}[2][2=\empty]{\langle#1\ifx#2\empty\else\mid#2\fi\rangle}
\DeclareMathOperator{\SU}{SU}
\DeclareMathOperator{\Sp}{Sp}
\newcommand{\centersubgrp}{\mathbf{Z}}
\DeclareMathOperator{\id}{id}
\DeclareMathOperator{\rk}{rk}
\newcommand{\nats}{\mathbb{N}}
\newcommand{\card}[1]{\lvert#1\rvert}
\DeclareMathOperator{\SL}{SL}
\DeclareMathOperator{\PSL}{PSL}
\DeclareMathOperator{\diag}{diag} 
\newcommand{\setjoin}{\cup}
\newcommand{\nathom}[1]{\overline{#1}}
\newcommand{\subgrpeq}{\leq}
\newcommand{\iso}{\cong}
\DeclareMathOperator{\rchar}{char}
\newcommandx{\gennorsubgrp}[2][2=\empty]{\langle\!\langle#1\ifx#2\empty\else\mid#2\fi\rangle\!\rangle}
\newcommand{\commutator}[1]{[#1]}
\newcommand{\norsubgrpeq}{\trianglelefteq}
\DeclareMathOperator{\GI}{GI}
\DeclareMathOperator{\ct}{ct}
\newcommand{\floor}[1]{\left\lfloor#1\right\rfloor}
\DeclareMathOperator{\GO}{GO}
\newcommand{\trivgrp}{\mathbf{1}}
\DeclareMathOperator{\SO}{SO}
\DeclareMathOperator{\supp}{supp}
\DeclareMathOperator{\Alt}{Alt}
\theoremstyle{plain}
\newtheorem{lemma}{Lemma}
\newtheorem{theorem}{Theorem}
\newtheorem{corollary}{Corollary}
\theoremstyle{definition}
\newtheorem{remark}{Remark}
\newtheorem{example}{Example}
\title[On the normal subgroup lattice]{A note on the normal subgroup lattice of ultraproducts of finite quasisimple groups}
\author{Jakob Schneider}
\address{J.~Schneider, TU Dresden, 01062 Dresden, Germany}
\email{jakob.schneider@tu-dresden.de}
\author{Andreas Thom}
\address{A.~Thom, TU Dresden, 01062 Dresden, Germany}
\email{andreas.thom@tu-dresden.de}
\begin{document}
	\begin{abstract}
		In \cite{stolzthom2014lattice} it was stated that the lattice of normal subgroups of an ultraproduct of finite simple groups is always linearly ordered. This is false in this form in most cases for classical groups of Lie type. We correct the statement and point out a version of \enquote*{relative} bounded generation results for classical quasisimple groups and its implications on the structure of the lattice of normal subgroups of an ultraproduct of quasisimple groups.
	\end{abstract}
	\maketitle
	
	\tableofcontents
	
	\section{Introduction}
	
	The purpose of this note is to correct statements from the work of Stolz and the second author in \cite{stolzthom2014lattice} and generalize to a setting of quasisimple groups. As stated, \cite[Theorem~3.9]{stolzthom2014lattice} is not correct and our main result (Theorem~\ref{thm:main}) should replace it. Note that already in \cite{dowerkthom2015bounded}, it was pointed out that some of the techniques and results of \cite{stolzthom2014lattice} were flawed. Some corrections on results about bounded generation in the setting of unitary groups on finite-dimensional Hilbert spaces can be found in \cite{dowerkthom2015bounded}. The statement of \cite[Theorem~4.20]{stolzthom2014lattice} should be considered as open problem at the moment. In this note we focus entirely on the case of finite groups.

\vspace{0.1cm}

Let $G$ be a group, $g \in G$ and $L \subset G$. We write $g^G := \{ hgh^{-1} \mid h \in G \}$ for the conjugacy class of $g$ and call $N$ normal if $g \in L$ implies $g^G \subset L$. We denote by $L^{\ast k}$ the set of $k$-fold products of elements in $L$, i.e. $L^{\ast k} = \{l_1\cdots l_k \mid l_1,\dots,l_k \in L \}$.
		Using the results of \cite{liebeckshalev2001diameters} it is a simple matter to prove the following result about bounded \enquote*{relative} generation for the alternating groups $A_n$ with $n\geq 5$:
	
	\begin{lemma}
		There exists $c>0$ such that for any $S,T\subseteq A_n$ normal subsets with $\card{S},\card{T}\geq 1$, $T\neq\set{1}$ for any integer $$k\geq c\max\set{\log\card{S}/\log\card{T},1}$$ it holds that $S\subseteq (T^{A_n})^{\ast k}$.
	\end{lemma}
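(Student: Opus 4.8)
The plan is to combine the diameter estimate of Liebeck--Shalev with a support-counting argument. The only external input is the main theorem of \cite{liebeckshalev2001diameters}, which I use in the form: there is an absolute constant $c_0\geq 1$ such that for every finite simple group $G$ and every normal subset $U\subseteq G$ with $U\neq\set{1}$ one has $U^{\ast m}=G$ for every integer $m\geq c_0\log\card G/\log\card U$; together with the trivial remark that once $U^{\ast m_0}=G$ we have $U^{\ast m}=G$ for all $m\geq m_0$, since $U\neq\emptyset$ forces $U\cdot G=G$. First I would reduce: writing $S$ as a union of conjugacy classes, each of cardinality at most $\card S$, and observing that $S\subseteq T^{\ast k}$ holds as soon as each of these classes does, it is enough to treat $S=g^{A_n}$ for a single $g\in A_n$ (the case $g=1$ included), with $\card S$ replaced throughout by $\card{g^{A_n}}\leq\card S$. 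Moreover $T^{A_n}=T$, and for any $h\in T$ we have $h^{A_n}\subseteq T$ and hence $(h^{A_n})^{\ast k}\subseteq T^{\ast k}$; fix once and for all an $h\in T$ of maximal support $s(h)$, writing $s(x)$ for the number of points moved by $x$. As $T\neq\set{1}$ this $h$ is nontrivial, so $s(h)\geq 3$.

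The crux, and the step I expect to be the main obstacle, is a \emph{building lemma}: there is an absolute constant $C$ such that for all $n\geq 5$, all $g\in A_n$, and all nontrivial $h\in A_n$, one has $g\in(h^{A_n})^{\ast m}$ for every integer $m\geq C\max\set{s(g)/s(h),1}$. If either $s(g)$ or $s(h)$ is at least a fixed small fraction of $n$, this comes straight from Liebeck--Shalev inside $A_n$: then $\log\card{h^{A_n}}$ is of order $s(h)\log n$ while $\log\card{A_n}$ is of order $n\log n$, so $(h^{A_n})^{\ast m}=A_n$ already for some $m$ of order $\max\set{s(g)/s(h),1}$, and one is done. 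In the remaining range (both supports small) I would decompose $g$ into $O(\lceil s(g)/s(h)\rceil)$ \emph{pieces}: split every cycle of $g$ longer than $s(h)$ into cycles of length at most $s(h)$ by the chaining identity $(a_1\,a_2\,\dots\,a_\ell)=(a_1\,\dots\,a_t)(a_t\,\dots\,a_\ell)$, then group the resulting cycles into blocks of total support of order $s(h)$, each an even permutation---using that $g$ itself is even to pair up the odd blocks---placed on pairwise disjoint point sets, with $s(h)$ further points held in reserve; since both $s(g)$ and $s(h)$ are small fractions of $n$ this all fits inside $n$ points. Each block $B$ lies in $\Alt(\Sigma)$ for a set $\Sigma$ with $\card\Sigma$ of order $s(h)$ that contains the support of a suitable conjugate of $h$, and the relevant class in $\Alt(\Sigma)$ has order $\card{\Alt(\Sigma)}^{\Omega(1)}$ (its elements move at least $\tfrac12\card\Sigma$ points), so Liebeck--Shalev inside $\Alt(\Sigma)$ writes $B$ as a product of \emph{any} prescribed number of conjugates of $h$ supported on $\Sigma$, above an absolute constant. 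Summing over the blocks, and pushing any surplus of factors into one block, exhibits $g$ as a product of exactly $m$ conjugates of $h$ for every $m$ above the threshold; $g=1$ uses no blocks, and the finitely many small $n$, where one argues inside a fixed finite group, are handled separately.

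Finally I would assemble the statement by a dichotomy on $\card S=\card{g^{A_n}}$. Write $a=\log\card{A_n}$ and fix a small absolute $\delta>0$. If $\log\card S\geq\delta a$, then by Liebeck--Shalev $T^{\ast m}=A_n$ for $m\geq c_0 a/\log\card T$, and since $c_0 a/\log\card T\leq(c_0/\delta)\log\card S/\log\card T$ this gives $T^{\ast k}=A_n\supseteq S$ as soon as $c\geq c_0/\delta$. If $\log\card S<\delta a$, then for $\delta$ small every element of $S$ moves at most $n/2$ points, and an elementary estimate on centraliser orders (Stirling applied to $\prod_i i^{m_i}m_i!$) yields $\log\card{g^{A_n}}\geq c_1\, s(g)\log n$ for an absolute $c_1>0$; on the other hand every element of $T$ moves at most $s(h)$ points, so $\card T\leq(s(h)+1)\,n^{s(h)}$ and $\log\card T\leq 2\,s(h)\log n$. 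Hence $\log\card S/\log\card T\geq\tfrac{c_1}{2}\,s(g)/s(h)$, so the hypothesis $k\geq c\max\set{\log\card S/\log\card T,1}$ forces $k\geq C\max\set{s(g)/s(h),1}$ provided $c\geq 2C/c_1$, and the building lemma gives $g\in(h^{A_n})^{\ast k}\subseteq T^{\ast k}$. Choosing $c$ larger than every threshold that occurred finishes the proof.
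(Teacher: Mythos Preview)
Your argument is correct, and the final assembly (choosing $h\in T$ of maximal support, bounding $\log\card{T}$ by $O(s(h)\log n)$, and invoking the Lipschitz equivalence $\log\card{g^{A_n}}\asymp s(g)\log n$) matches what the paper has in mind when it says the lemma is ``a simple matter'' and recalls that $\ell^{\rm H}$ and $\ell^{\rm c}$ are Lipschitz equivalent.

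Where you diverge from the paper is in the proof of your \emph{building lemma}, which is exactly the $A_n$ part of the paper's Lemma~\ref{lem:rel_bd_gen_alt_case}. The paper's argument is a single line: after replacing $g$ by a conjugate one may assume $\supp(g)\subseteq\supp(h)$ or $\supp(h)\subseteq\supp(g)$, and then one applies Liebeck--Shalev \emph{once} inside $\Alt$ of the larger support. In the first case $h$ has full support in $\Alt(\supp(h))$, so a bounded number of conjugates suffices; in the second case $h$ has relative Hamming length $s(h)/s(g)$ in $\Alt(\supp(g))$, so $k\geq c\,s(g)/s(h)$ conjugates suffice. Your route---splitting cycles of $g$, regrouping into blocks of size $\sim s(h)$ on (almost) disjoint point sets, keeping reserve points, and fixing parities---also works, but it is considerably more delicate (the cycle-splitting identity produces overlapping supports at the cut points, and the parity bookkeeping requires care), and all of that machinery is avoided by the nesting trick. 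The payoff of the paper's approach is brevity and the absence of any case distinction on whether supports are a small or large fraction of $n$; the payoff of yours is that it makes the ``exactly $m$ conjugates for every $m$ above threshold'' property very explicit, though this already follows in the paper's approach from $(h^K)^{\ast m_0}=K\Rightarrow (h^K)^{\ast m}=K$ for all $m\geq m_0$.
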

	
	A straightforward consequence of this fact is that for any $\sigma,\tau\in A_n$ either $\sigma\in(\tau^{A_n})^{\ast k}$ or $\tau\in(\sigma^{A_n})^{\ast k}$ for an integer $k\geq c$ which easily implies that the normal subgroups of an algebraic ultraproduct of alternating groups are linearly ordered.
	
	However, in the case of finite classical simple groups of Lie type, all of the above is false in general. The prototype of a counterexample is given by the following families of elements. 
	
	\begin{example}\label{exl:non_corr_elmts}
		Let $h_1,h_2\in H\defeq\SL_q(q)\iso\PSL_q(q)$ be the elements given by $h_1=\diag(1,\lambda,\ldots,\lambda)$ and $h_2=\diag(1,\mu,\ldots,\mu)$ where $\lambda,\mu\in\finfield_q^\ast$ are arbitrary such that $\lambda\not\in\gensubgrp{\mu}$ and $\mu\not\in\gensubgrp{\lambda}$, e.g.~take $\lambda=\zeta^a$ and $\mu=\zeta^b$ where $\gensubgrp{\zeta}=\finfield_q^\ast$ and $a,b>1$ are coprime with $q-1=ab$. Then it is easy to show by induction that each $h \in (h_1^H\setjoin (h_1^{-1})^H)^{\ast k}$ has at least $q-k$ non-trivial eigenvalues in $\gensubgrp{\lambda}$. From the assumptions it follows that $h_2\not\in(h_1^H\setjoin (h_1^{-1})^H)^{\ast k}$ for any such $k\leq q-1$. Similarly, it follows that 
$h_1\not\in(h_2^H\setjoin (h_2^{-1})^H)^{\ast k}$ for $k \leq q-1$.
	\end{example}
	
	This example already implies that the normal subgroups of an algebraic ultraproduct of finite simple groups of type $\SL_q(q)$ are not linearly ordered. In this note, we shall prove the \enquote*{best possible} result on bounded relative generation in classical quasisimple groups of Lie type. Recall, a quasisimple group is a perfect central extension of a simple group. Every finite non-abelian simple group has a universal quasisimple extension which is called the Schur covering group. We will also describe the lattice of normal subgroups of an algebraic ultraproduct of the universal finite quasisimple groups. 
	
	In the study of ultraproducts of finite quasisimple groups, we may ignore exceptional finite non-abelian simple groups, abelian groups, and families of bounded rank. Indeed, in those cases the ultraproduct yields a finite group or a quasisimple group defined over a pseudofinite field. Those groups and their lattice of normal subgroups are much easier to analyse. Assuming the classification of non-abelian finite simple groups, it follows that the remaining cases consist only of ultraproducts of the covering groups of alternating groups with increasing rank and of ultraproducts of quasisimple groups of Lie type with increasing rank. We will show (see Remark \ref{end}) that the lattice of normal subgroups in the case of universal covers of alternating groups is still a linear order. It remains to study ultraproducts of universal quasisimple finite groups of Lie type. Each of the groups of Lie type appearing as factors in the ultraproduct is endowed with its natural rank function $\ell^{\rm rk}$ and projective rank function $\ell^{\rm pr}$, as discussed later in the paper. We will prove the following theorem.
	
	\begin{theorem} \label{thm:main}
		Let $\seq{H_i}_{i\in I}$ be a sequence of universal quasisimple finite groups of Lie type of increasing rank. Set $G:= \prod_i H_i$ and define subgroups $N^{\rm rk}\defeq\set{\seq{g_i}\in G}[\lim_{\mathcal U}{\ell^{\rm rk}(g_i)}=0]$ and $N^{\rm pr}\defeq\set{\seq{g_i}\in G}[\lim_{\mathcal U}{\ell^{\rm pr}(g_i)}=0]$ of $G$. We also define $N_0\defeq\set{\seq{g_i}\in G}[g_i=1_{H_i}\text{ along }\mathcal U]$ and suitable subgroups $N_1,A_1 \leq G$, defined in Section \ref{sec:lat}. Then,
		\begin{enumerate}[(i)]
			\item The subgroup $N^{\rm pr}$ contains all proper normal subgroups of $G$ containing $N_0$. Especially, $G/N^{\rm pr}$ is non-abelian simple and $N^{\rm pr}/N_0$ is a characteristic subgroup of $G/N_0$.
			\item The normal subgroups of $G$ lying between $N_0$ and $N^{\rm rk}$ are linearly ordered. Moreover, each such normal subgroup is perfect.
			\item Define mappings between the following two sets
			$$
			\begin{tikzcd}
			& \set{N\norsubgrpeq G}[N_1\subgrpeq N\subgrpeq N^{\rm pr}] \arrow[d,xshift=0.7ex,"\Phi"] &\\
			& \set{M}[M\norsubgrpeq G,\ N_1\subgrpeq M\subgrpeq N^{\rm rk}]\times\set{A}[A\norsubgrpeq G,\ N_1\subgrpeq A\subgrpeq A_1] \arrow[u,xshift=-0.7ex,"\Psi"] &
			\end{tikzcd}
			$$
			by $\Phi:N\mapsto(N\setmeet N^{\rm rk}, N\setmeet A_1)$ and $\Psi:(M,A)\mapsto MA$.	Then $\Phi$ and $\Psi$ are isomorphisms of posets and mutually inverse to each other.
		\end{enumerate}
	\end{theorem}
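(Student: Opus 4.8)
The statement is a deduction from the ``best possible'' relative bounded generation theorem for classical quasisimple finite groups of Lie type, i.e. the announced analogue --- with $\ell^{\rm rk},\ell^{\rm pr}$ replacing $\log\card{\cdot}$ --- of the Lemma above for $A_n$, transferred to $G$ via the coordinatewise description of conjugacy classes and products in a direct product. The forms of it I use are: \emph{(a)} if $g\in H$ has $\ell^{\rm pr}(g)\geq\delta>0$, then $(g^H\setjoin(g^{-1})^H)^{\ast k}=H$ for some $k=k(\delta)$ independent of $H$; \emph{(b)} for $g,h\in H$ with $h\neq 1$, one has $h\in(g^H\setjoin(g^{-1})^H)^{\ast k}$ as soon as $k\geq c\bigl(\ell^{\rm rk}(h)/\ell^{\rm rk}(g)+1\bigr)$; and \emph{(c)} a rank-controlled commutator-width bound: every $g\in H$ is a product of boundedly many commutators $[a,b]$ with $\ell^{\rm rk}(a),\ell^{\rm rk}(b)=O(\ell^{\rm rk}(g))$. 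From Section~\ref{sec:lat} I also use that $N^{\rm rk},N^{\rm pr}\norsubgrpeq G$ (conjugation invariance and subadditivity of the rank functions), that $N_0\subgrpeq N_1\subgrpeq N^{\rm rk}\subgrpeq N^{\rm pr}\subsetneq G$, that $G$ is perfect, and the structural relations $N^{\rm pr}=N^{\rm rk}A_1$, $N^{\rm rk}\setmeet A_1=N_1$, $\commutator{N^{\rm rk},A_1}\subgrpeq N_1$, $A_1/N_1$ abelian. Finally, for $N\norsubgrpeq G$ and $g\in N$ I use $\gennorsubgrp{g}=\bigcup_k(g^G\setjoin(g^{-1})^G)^{\ast k}\subgrpeq N$, together with the fact that $(g^G\setjoin(g^{-1})^G)^{\ast k}$ is computed coordinatewise.

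\emph{Parts (i) and (ii).} For (i), let $N_0\subgrpeq N\norsubgrpeq G$ with $N\not\subgrpeq N^{\rm pr}$, pick $g\in N\setminus N^{\rm pr}$ and put $\delta\defeq\lim_{\mathcal U}\ell^{\rm pr}(g_i)>0$; by \emph{(a)}, $H_i=(g_i^{H_i}\setjoin(g_i^{-1})^{H_i})^{\ast k_0}$ for $\mathcal U$-almost all $i$ with $k_0=k_0(\delta)$, so coordinatewise $(g^G\setjoin(g^{-1})^G)^{\ast k_0}N_0=G$ and hence $N=G$. Thus $N^{\rm pr}$ is the unique maximal normal subgroup of $G$ containing $N_0$; consequently $G/N^{\rm pr}$ is simple, it is perfect and non-trivial, hence non-abelian, and $N^{\rm pr}/N_0$ is characteristic in $G/N_0$ as the unique maximal normal subgroup there. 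For (ii): by \emph{(b)} and the coordinatewise description, any $N_0\subgrpeq N\norsubgrpeq G$ with $N\subgrpeq N^{\rm rk}$ equals $\set{h\in G}[\ell^{\rm rk}(h_i)=O(\ell^{\rm rk}(g_i))\text{ along }\mathcal U\text{ for some }g\in N]$, so $N$ is determined by the set of germs $[\ell^{\rm rk}(g_i)]_{\mathcal U}$ it contains, which is downward closed for $O(1)$-domination; as this quasi-order is total on such germs (for positive $\mathcal U$-null sequences $r,s$ the limit $\lim_{\mathcal U}r_i/s_i$ lies in $[0,\infty]$), these $N$ form a chain. Explicitly, if $g\in N\setminus N'$ with $N,N'$ in this interval then $\ell^{\rm rk}(g_i)\neq O(\ell^{\rm rk}(h_i))$ for every $h\in N'$, forcing $\ell^{\rm rk}(h_i)=O(\ell^{\rm rk}(g_i))$, hence $h\in\gennorsubgrp{g}\subgrpeq N$, so $N'\subgrpeq N$. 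Perfectness: by \emph{(c)} write $g_i=\prod_j[a_{ij},b_{ij}]$ in $H_i$ with boundedly many factors and $\ell^{\rm rk}(a_{ij}),\ell^{\rm rk}(b_{ij})=O(\ell^{\rm rk}(g_i))$; then $\seq{a_{ij}}_i,\seq{b_{ij}}_i\in N$ by the displayed description, so $g\in\commutator{N,N}$.

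\emph{Part (iii).} Both maps are order preserving and land where claimed: $MA\norsubgrpeq G$, $MA\subgrpeq N^{\rm rk}A_1=N^{\rm pr}$, while $N\setmeet N^{\rm rk}$ and $N\setmeet A_1$ are normal and lie in the correct intervals since $N_1\subgrpeq N$. That $\Phi\Psi=\id$ is the modular law: $MA\setmeet N^{\rm rk}=M(A\setmeet N^{\rm rk})=M$, using $M\subgrpeq N^{\rm rk}$ and $A\setmeet N^{\rm rk}\subgrpeq A_1\setmeet N^{\rm rk}=N_1\subgrpeq M$, and symmetrically $MA\setmeet A_1=A$. For $\Psi\Phi=\id$ pass to $G/N_1$ and set $P\defeq N^{\rm rk}/N_1$, $B\defeq A_1/N_1$, $\bar N\defeq N/N_1$; then $N^{\rm pr}/N_1=P\times B$ is an internal direct product with $B$ abelian (hence central in $P\times B$), $P$ perfect, and every $G$-invariant subgroup of $P$ perfect by part (ii) applied to $[N_1,N^{\rm rk}]$. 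Let $\pi\colon P\times B\to P$ be the projection and $K\defeq\bar N\setmeet P$. Since $P,\bar N\norsubgrpeq G/N_1$ we get $\commutator{\bar N,P}\subgrpeq K$; because the $B$-part commutes with $P$ this equals $\commutator{\pi(\bar N),P}$, and $\pi(\bar N)$ is $G$-invariant in $P$; moreover $\pi(\bar N)/K\iso\bar N/\bigl(K(\bar N\setmeet B)\bigr)$ is abelian since $\commutator{\bar N,\bar N}\subgrpeq\commutator{\bar N,P\times B}=\commutator{\bar N,P}\subgrpeq K$. As $\pi(\bar N)$ is perfect this forces $\pi(\bar N)=K$, so $\pi|_{\bar N}$ has image $K\subgrpeq\bar N$ and kernel $\bar N\setmeet B$; hence $\bar N=K\times(\bar N\setmeet B)=(\bar N\setmeet P)\times(\bar N\setmeet B)$, which lifts to $N=(N\setmeet N^{\rm rk})(N\setmeet A_1)=\Psi(\Phi(N))$.

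\emph{The main obstacle.} Nothing above is hard; the real content is in the inputs, above all the relative bounded generation theorem with its \emph{linear} dependence on the ratio of (projective) ranks, uniformly in the field of definition and the Lie type, and its center-sensitive refinement, which is what produces $N_1$, $A_1$ and the relations $N^{\rm pr}=N^{\rm rk}A_1$, $N^{\rm rk}\setmeet A_1=N_1$, $\commutator{N^{\rm rk},A_1}\subgrpeq N_1$. Within the deduction itself the only genuine idea is the rigidity exploited in (iii): a perfect direct factor admits no non-trivial homomorphism into an abelian group, so a $G$-invariant normal subgroup of $P\times B$ (with $P$ perfect, $B$ abelian) cannot be diagonal and must split as a product.
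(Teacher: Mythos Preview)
Your deduction is largely sound, and parts (i) and the linear-order half of (ii) match the paper's argument (Lemma~\ref{lem:el_outside_Npr} and Lemma~\ref{lem:nor_subgrp_blw_Nrk}). Two points deserve comment.

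Your input (b), as stated, is false: Example~\ref{exl:non_corr_elmts} shows that without the extra hypothesis $\ell^{\rm rk}(g)\leq 1-\varepsilon$ and $g\notin\centersubgrp(H)$ no such bound can hold; this is precisely the restriction in Lemma~\ref{lem:rk_bd_gen_class_grps}. For your application inside $N^{\rm rk}$ the hypothesis is automatic along $\mathcal U$, so nothing breaks, but (b) should be stated correctly. More seriously, your input (c) --- a rank-controlled bounded commutator width --- is not supplied by the paper and is not trivial. The paper proves perfectness (Corollary~\ref{cor:blw_Nrk_perf}) without it: the geometric construction of Lemma~\ref{lem:ct_Npr_to_Nrk} produces for each $g\in N$ an element $h$ with $\ct(h)\leq\ct(g)$ (hence $h\in N$) and $\ct([g,h])=\ct(g)$; thus $\ct(N')=\ct(N)$, and the order-ideal bijection of Lemma~\ref{lem:nor_subgrp_blw_Nrk} forces $N'=N$. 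Your (c) can be recovered by combining the embedding trick from the proof of Lemma~\ref{lem:rk_bd_gen_class_grps} (pass to a small classical subgroup $K$ in which $g$ has $\ell^{\rm rk}$ bounded away from $0$ and $1$) with uniformly bounded commutator width in finite quasisimple groups, but that is an external ingredient you would have to cite.

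Your treatment of (iii) is genuinely different from the paper's and rather clean. The paper (Lemma~\ref{lem:lat_abv_N_1}) establishes $\Psi\Phi=\id$ by an explicit coordinatewise decomposition: for $g\in N$ it takes $h=\seq{h_i}$ with $h_i$ the almost-scalar element of Lemma~\ref{lem:all_qsi_scal_attained} (so $h\in A_1$) and sets $f=gh^{-1}\in N^{\rm rk}$, then checks $f,h\in N$. You instead pass to $G/N_1$, realise $N^{\rm pr}/N_1$ as an internal direct product $P\times B$ with $B$ central abelian and every $G$-normal subgroup of $P$ perfect, and conclude that any $G$-normal $\bar N$ must split because its projection $\pi(\bar N)$ (which lifts to $NA_1\cap N^{\rm rk}$, a group covered by part (ii)) is perfect with abelian quotient $\pi(\bar N)/(\bar N\cap P)$. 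This is conceptually nicer, but note that it consumes perfectness from (ii), whereas the paper's argument for (iii) is independent of it. The structural inputs you quote ($N^{\rm pr}=N^{\rm rk}A_1$, $N^{\rm rk}\cap A_1=N_1$, $[G,A_1]\subseteq N_1$) are indeed consequences of Lemma~\ref{lem:char_cent_of_G/Nrk} and the definitions of $N_1,A_1$, though the last one is only implicit in the paper and needs the one-line refinement of the commutator estimate at the end of that lemma.
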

	
	Throughout, we use the notation and basic facts from \cite{wilson2009finite}.
	
	\section{Auxiliary geometric results}\label{sec:gmtry}
	
	In this section we provide the necessary geometric results for the rest of the article. In the following let $H$ be a quasisimple group from the following list:
	\begin{enumerate}[(i)]
		\item linear: $\SL_n(q)$, $n\geq 2$, $(n,q)\neq(2,2),(2,3)$;
		\item symplectic: $\Sp_{2m}(q)$, $m\geq 2$, $(m,q)\neq(2,2)$;
		\item unitary: $\SU_n(q)$, $n\geq 3$, $(n,q)\neq(3,2)$;
		\item orthogonal: $\Omega_{2m+1}(q)$, $m\geq 3$, $q$ odd; $\Omega^\pm_{2m}(q)$, $m\geq 4$.
	\end{enumerate}
	
		Here we omit the groups $\Omega_{2m+1}(q)$ with $q$ even as they are isomorphic to the groups $\Sp_{2m}(q)$. We have also omitted to mention the spin groups, i.e. the double covers of $\Omega^{\pm}_{2n}(q)$ and $\Omega_{2n+1}(q)$, as they can be treated directly in a way similar to the double cover of the alternating groups, see Remark \ref{end}.
	
\vspace{0.1cm}	
	
	Let $V$ be the natural representation of $H$ (there are two such representations when $H=\Omega_{2n+1}(q)$ for $q$ odd, corresponding to the two equivalence classes of non-degenerate symmetric bilinear forms in this dimension) and $n\defeq\dim V$. For (ii)--(iv), denote by $f$ the corresponding non-degenerate alternating, conjugate-symmetric sesquilinear, or symmetric bilinear form. In (iv) for $q$ even, denote by $Q$ the corresponding quadratic form inducing the non-degenerate alternating form $f$.
	
	A subspace $U$ of $V$ is called non-singular if $\rest{f}_U$ is non-degenerate. In this case $U\oplus U^\perp=V$.
		The following fact will be used in the Section~\ref{sec:lat}.
	
	\begin{lemma}\label{lem:inv_non-sing_subsp}
		If $U\subspeq V$ is a non-singular subspace with $2\leq\dim U<n/2$, then there exists a decomposition $W_1\oplus W_2=U^\perp$ and an element $h\in H$ such that $h$ is the identity on $W_2$ and interchanges $U$ and $W_1$.
	\end{lemma}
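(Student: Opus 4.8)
The plan is to realise $h$ as a product $h = ks$ of an elementary ``swap'' $s$ and a correction term $k$, with $k$ chosen so that $h$ lands in $H$ and not merely in the surrounding classical group. Write $d \defeq \dim U$, so $2 \le d < n/2$. As $U$ is non--singular we have $V = U \oplus U^\perp$ with $U^\perp$ non--singular of dimension $n - d \ge d + 1$ (in case (i) read $U^\perp$ as a fixed complement of $U$). The first step is to find inside $U^\perp$ a subspace $W_1$ isometric to $U$; its orthogonal complement $W_2 \defeq W_1^\perp \setmeet U^\perp$ inside $U^\perp$ is then automatically non--singular, and $V = U \oplus W_1 \oplus W_2$ is an orthogonal sum of non--singular subspaces with $\dim W_1 = d$. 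In the symplectic and unitary cases such a $W_1$ exists because there any two non--degenerate forms of equal dimension are isometric; in the orthogonal case one needs $U^\perp \iso U \perp W_2$ for some non--degenerate $W_2$ of dimension $n - 2d$, and since $n - 2d \ge 1$ there is room to prescribe the discriminant of $W_2$ (its $\pm$--type when $q$ is even) so that this holds. In case (i) one takes any $d$--dimensional $W_1 \subspeq U^\perp$ and any complement $W_2$.

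Now fix an isometry $\phi \colon U \to W_1$ (any linear isomorphism in case (i)) and let $s$ be the map that is $\phi$ on $U$, $\phi^{-1}$ on $W_1$, and the identity on $W_2$. Being assembled from isometries on mutually orthogonal non--singular summands --- and preserving the quadratic form as well when $q$ is even, since $\phi$ does --- the map $s$ lies in the ambient group $\Gamma$, i.e.\ in $\operatorname{GL}(V)$ in case (i) and in the full isometry group $\Sp$, $\operatorname{GU}$, resp.\ $\GO$ of the relevant form in cases (ii)--(iv). By construction $s$ interchanges $U$ and $W_1$ and is the identity on $W_2$; the only possible defect is that $s$ need not lie in $H$.

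To remove this defect, recall that $H \norsubgrpeq \Gamma$ with $\Gamma / H$ abelian, detected by the determinant in cases (i) and (iii), trivial in case (ii), and detected by the determinant together with the spinor norm --- the Dickson invariant when $q$ is even --- in case (iv). For $g$ in the corresponding group $\operatorname{G}(U)$ of $U$ (one of $\operatorname{GL}(U)$, $\Sp(U)$, $\operatorname{GU}(U)$, $\GO(U)$) put $k_g \defeq g \oplus \id_{U^\perp} \in \Gamma$; since $k_g$ acts as $g$ on $U$ and trivially on $U^\perp$, it shares its determinant, spinor norm, and Dickson invariant with $g$, so $g \mapsto k_g H$ is a homomorphism $\operatorname{G}(U) \to \Gamma / H$. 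This homomorphism is surjective because $d = \dim U \ge 2$: the determinant of $\operatorname{GL}_d(q)$ fills $\finfield_q^\ast$, that of $\operatorname{GU}_d(q)$ fills the norm--one subgroup of $\finfield_{q^2}^\ast$, and a non--degenerate quadratic space of dimension $\ge 2$ over a finite field carries vectors of every non--zero norm, so $\GO_d(q)$ has enough reflections to make (determinant, spinor norm) --- the Dickson invariant for $q$ even --- onto. Pick $g$ with $k_g \in H s^{-1}$ and set $h \defeq k_g s$. Then $h \in H$; and since $k_g$ stabilises $U$ and fixes $U^\perp$ pointwise whereas $s$ interchanges $U$ and $W_1$ and fixes $W_2$, the element $h$ carries $U$ onto $W_1$, carries $W_1$ onto $U$, and restricts to the identity on $W_2$, which is what was claimed.

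I expect the only genuinely delicate point to be the orthogonal case of the correction step --- making the spinor norm, and in characteristic $2$ the Dickson invariant, of $k_g$ cancel that of $s$ --- and it is exactly there that the hypothesis $\dim U \ge 2$ is used. Everything else is routine manipulation with Witt's theorem and the classification of non--degenerate forms over finite fields.
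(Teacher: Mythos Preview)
Your argument is correct and follows the same strategy as the paper: build the obvious swap $s=h_1$ between $U$ and an isometric copy $W_1\subseteq U^\perp$, then multiply by a correction of the form $g\oplus\id_{U^\perp}$ with $g$ in the isometry group of $U$ to land in $H$. The only difference is packaging---the paper writes down the correcting element explicitly in each of the cases (ii)--(iv), whereas you phrase the same step uniformly as surjectivity of the homomorphism $\mathrm{G}(U)\to\Gamma/H$, using $\dim U\geq 2$ at exactly the same point (the spinor norm\,/\,Dickson invariant in the orthogonal case).
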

	
	\begin{proof}
		Explicit computations with standard bases show that $U^\perp$ always contains a subspace $W_1$ isometric to $U$ via an isometry $\theta:U\to W_1$ with respect to $f$ or $Q$ in the orthogonal case in characteristic two (here we need $\dim U^\perp>\dim U$). Set $W_2\defeq(U\oplus W_1)^\perp$. Then $h_1:V=U\oplus W_1\oplus W_2\to U\oplus W_1\oplus W_2=V$ given by $(u,w_1,w_2)\mapsto(\theta^{-1}(w_1),\theta(u),w_2)$ is an isometric involution of $V$. 

Hence, in the symplectic case (ii) we may take $h\defeq h_1$. 
In the unitary case (iii), if $\det(h_1)=\varepsilon\in\set{\pm 1}$, letting $h_2$ be the linear map which scales a non-isotropic vector of $U$ by $\varepsilon$ and fixes its perpendicular complement, then $h\defeq h_1 h_2$ works.
		
		In the orthogonal case (iv) for $q$ odd, define $h_2$ as in the unitary case. If the spinor norm of $h_1 h_2$ is $\varepsilon\in\set{\pm 1}$, find an element $s\in\SO(U)$ of spinor norm $\varepsilon$ and set $h_3\defeq s\oplus\id_{U^\perp}$ (for the existence of $s$ we use that $\dim U\geq 2$). Then $h\defeq h_1 h_2 h_3$ works. 
		
		In the orthogonal case for $q$ even (forcing $\dim V$ to be even), if the quasideterminant of $h_1$ is $\varepsilon\in\set{\pm 1}$, find an element $s\in\GO(U)$ of quasideterminant $\varepsilon$ and set $h_4\defeq s\oplus\id_{U^\perp}$ (again using $\dim U\geq 2$). Then $h\defeq h_1 h_4$ is an appropriate choice. The proof is complete.
	\end{proof}
	
	The following simple result about the existence of certain non-singular subspaces will also be used in later sections.
	
	\begin{lemma}\label{lem:subsp_to_nonsing_subsp}
		Let $U\subspeq V$, $\dim U=l$. Then there exists $W\subspeq U$ non-singular such that $\dim W\geq 2l-n$.
	\end{lemma}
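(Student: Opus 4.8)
The plan is to take $W$ to be a complement inside $U$ of the radical of the restricted form. Concretely, I would set $R\defeq U\setmeet U^\perp$, the radical of $\rest{f}_U$. Since $f$ is non-degenerate on $V$ one has $\dim U^\perp=n-l$, and as $R\subspeq U^\perp$ this immediately gives $\dim R\le n-l$. Now pick any vector-space complement $W$ of $R$ inside $U$; then
$$\dim W=l-\dim R\ge l-(n-l)=2l-n,$$
which is exactly the bound we want. (When $2l-n\le 0$ the statement is vacuous and one may simply take $W=0$, so there is no harm in assuming $l>n/2$.)

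The second step is to verify that $W$ is non-singular. Suppose $w\in W$ is orthogonal with respect to $f$ to all of $W$. Because $R\subspeq U^\perp$, the vector $w$ is also orthogonal to every element of $R$, hence to all of $U=W\oplus R$; thus $w\in U\setmeet U^\perp=R$, so $w\in W\setmeet R=\set{0}$. Hence $\rest{f}_W$ is non-degenerate, i.e.\ $W$ is non-singular in the sense fixed above. In the orthogonal case with $q$ even this is all that is required: $\dim W$ is then automatically even and $\rest{Q}_W$ is a non-degenerate quadratic form polarizing to $\rest{f}_W$.

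I do not expect any real obstacle here; the argument is short. The only two points needing a word of care are, first, that $U^\perp$ has codimension $\dim U$ in $V$ also for the conjugate-symmetric sesquilinear form of case~(iii) — which holds because that form is non-degenerate and its left and right perpendicular spaces coincide — and, second, the standard linear-algebra fact invoked above that restricting a reflexive form to any complement of its radical produces a non-degenerate form. Both are routine, and the lemma follows.
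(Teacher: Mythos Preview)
Your argument is correct. The approach differs slightly from the paper's: rather than choosing $W\subspeq U$ \emph{maximal} non-singular and then arguing by contradiction that $W^\perp\setmeet U$ must be totally isotropic (hence equal to the radical $U\setmeet U^\perp$), you start at the other end --- set $R=U\setmeet U^\perp$, take \emph{any} complement $W$ of $R$ inside $U$, and verify directly that $\rest{f}_W$ is non-degenerate. Both arguments pivot on the same observation that a complement of the radical is non-singular, but yours is more constructive and avoids the maximality step; the paper's version, on the other hand, shows in passing that every maximal non-singular subspace of $U$ already has the required dimension. Your remarks about the sesquilinear case (reflexivity giving $\dim U^\perp=n-l$) and the even-characteristic orthogonal case (non-degeneracy of $\rest{f}_W$ forcing $\dim W$ even and $\rest{Q}_W$ non-degenerate) are accurate and match the paper's convention that ``non-singular'' means $\rest{f}_W$ is non-degenerate.
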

	
	\begin{proof}
		Chose $W\subspeq U$ maximal non-singular. If there were two vectors $u,v\in W^\perp\setmeet U$ with $f(u,v)\neq 0$, then $W\oplus\gensubsp{u,v}>W$ would still be non-singular. Hence $f$ is zero on this subspace. Moreover, for dimension reasons $U=W\oplus(W^\perp\setmeet U)$. Together this implies $U^\perp\setmeet U=W^\perp\setmeet U$. Hence $$\dim(W^\perp\setmeet U)=\dim U-\dim W\leq\dim U^\perp=n-l,$$ so $\dim W\geq 2l-n$ as desired.
	\end{proof}
	
	We introduce the group of quasiscalars $S(H)$ of $H$ by
	\begin{align*}
	S(\SL_n(q))&\defeq \finfield_q^\ast\iso C_{q-1};\\ S(\SU_n(q))&\defeq\set{x\in\finfield_{q^2}^\ast}[x^{q+1}=1]\iso C_{q+1};\\
	S(\Sp_{2m})&\defeq\set{\pm 1};\\
	S(\Omega_{2m+1}(q))&\defeq\set{\pm 1} \quad (q\text{ odd});\\
	S(\Omega_{2m}^{\pm}(q))&\defeq\set{\pm 1}.
	\end{align*}
	Of course $-1=1$ if $\rchar(\finfield_q)=2$.
		Our last auxiliary result will be used in Section~\ref{sec:lat}:
	
	\begin{lemma}\label{lem:all_qsi_scal_attained}
		For any $\lambda\in S(H)$ there is a diagonalizable element in $h\in H$ with respect to $V$ and a suitable basis such that all but two of its diagonal entries are equal to $\lambda$.
	\end{lemma}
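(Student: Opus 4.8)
The plan is to treat each family from the list (i)--(iv) separately, in each case writing down an explicit diagonalizable element with the prescribed eigenvalue pattern and checking that it satisfies the defining conditions of $H$ (unit determinant, isometry of $f$ or $Q$, trivial spinor norm or quasideterminant as appropriate). The guiding idea: since $n = \dim V$ is large, there is enough room to ``absorb'' the constraints into the two exceptional diagonal entries.

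First I would handle the linear case $H = \SL_n(q)$: here $S(H) = \finfield_q^\ast$, and given $\lambda$ one simply takes $h = \diag(\lambda, \lambda^{-(n-2)}\cdot\text{(one entry)},\lambda,\dots,\lambda)$ — more precisely $h$ with $n-2$ entries equal to $\lambda$ and the remaining two entries $\mu, \nu$ chosen so that $\mu\nu = \lambda^{-(n-2)}$, which is always solvable in $\finfield_q^\ast$. Next, for the isometry groups I would use a hyperbolic (or in the unitary case a suitable orthogonal) basis: group the basis vectors into hyperbolic pairs $\{e_j, f_j\}$ and act on the $j$-th pair by $\diag(\lambda_j, \lambda_j^{-1})$ (in the unitary case $\diag(\lambda_j, \lambda_j^{-q})$), which is automatically an isometry of the restricted form and has determinant $1$. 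Choosing all but one of the $\lambda_j$ equal to $\lambda$ and the last pair to carry a correcting value (together with the one-dimensional radical in the odd-dimensional orthogonal case, on which we can scale by $\pm 1$), we realize any target: note that in the symplectic, orthogonal and unitary cases the only admissible quasiscalars are $\pm 1$ (resp.\ $(q{+}1)$-th roots of unity), and the pattern $\diag(\lambda,\lambda^{-1})$ on a hyperbolic pair already gives eigenvalue $\lambda$ on one vector and $\lambda^{-1}$ on its partner, so once $\lambda = \pm 1$ (so $\lambda = \lambda^{-1}$) these coincide and all but two entries equal $\lambda$ as required; in the unitary case we instead arrange $\lambda^{q+1}=1$ so that $\lambda^{-q} = \lambda$. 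The two exceptional entries then live on the final hyperbolic pair, where we are free to place $\mu, \mu^{-1}$ (resp.\ $\mu, \mu^{-q}$) with $\mu$ chosen to fix up the spinor norm or quasideterminant.

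The main obstacle I expect is the bookkeeping in the orthogonal cases (iv): one must verify that the spinor norm (for $q$ odd) or the quasideterminant / Dickson invariant (for $q$ even) of the constructed element can indeed be adjusted to lie in $\{\pm 1\}$ as needed to land inside $\Omega$ rather than merely $\SO$ or $\GO$. This is handled exactly as in the proof of Lemma \ref{lem:inv_non-sing_subsp}: the value of these invariants on a block $\diag(\mu,\mu^{-1})$ acting on a hyperbolic pair runs over all of $\finfield_q^\ast / (\finfield_q^\ast)^2$ (resp.\ both Dickson classes) as $\mu$ varies, so by adjusting the correcting pair — and if necessary composing with a further isometry supported on two coordinates of $\Omega$-type and plus or minus identity elsewhere — we can cancel any unwanted spinor-norm or quasideterminant contribution while only disturbing the two exceptional diagonal entries. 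Since $n$ is large ($n \geq 7$ in the odd orthogonal case, $n \geq 8$ in the even case, and comparably in the others), there is always room for these corrections, and collecting the cases finishes the proof.
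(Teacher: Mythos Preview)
Your plan is correct and would prove the lemma, but it takes a somewhat different and more laborious route than the paper. The paper works throughout with \emph{orthogonal} (or orthonormal) bases rather than hyperbolic ones: for $\SL_n(q)$ and $\SU_n(q)$ it simply takes $h=\diag(\lambda,\ldots,\lambda,\lambda^{-(n-1)})$ in an arbitrary (resp.\ orthonormal) basis, for $\Sp_{2m}(q)$ with $q$ odd it takes $-\id$, and for the orthogonal groups in odd characteristic it writes $f$ in diagonal form $f(e_i,e_j)=\delta_{ij}$ (with possibly one exceptional norm) and takes $h=\diag(-1,\ldots,-1,1)$ or $\diag(-1,\ldots,-1,1,1)$. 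The point is that negating an even number of vectors of norm $1$ visibly has trivial spinor norm, so no correction step is needed at all. Your hyperbolic-pair construction reaches the same conclusion, but forces you to compute the spinor norm of $-\id$ on a hyperbolic plane (which is $-1$ modulo squares) and then cancel it by choosing $\mu$ on one pair; this is exactly the bookkeeping you anticipated, and your claim that the spinor norm of $\diag(\mu,\mu^{-1})$ on a hyperbolic plane realises both square classes is correct.

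One small inaccuracy: your assertion that the quasideterminant of $\diag(\mu,\mu^{-1})$ on a hyperbolic plane ``runs over both Dickson classes'' is false --- such an element always lies in $\Omega$. This does not affect your argument, however, since in characteristic~$2$ one has $S(H)=\{1\}$ for the orthogonal groups and the identity already suffices. (Also, the anisotropic line in the odd-dimensional orthogonal case is not the ``radical'' --- the form is non-degenerate --- but this is only a terminological slip.)
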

	
	\begin{proof}
		Clearly, we may assume that $\lambda\neq 1$, since otherwise we can always take $h=\id$. If $H=\SL_n(q)$, take $h=\diag(\lambda,\ldots,\lambda,\lambda^{-(n-1)})$ with respect to any basis of $V$. If $H=\Sp_{2m}(q)$, $q$ odd, and $\lambda=-1$, take $-\id_V$. If $H=\SU_n(q)$, take $h=\diag(\lambda,\ldots,\lambda,\lambda^{-(n-1)})$ with respect to an orthonormal basis $e_1,\ldots,e_n$ for $f$, i.e.~$f(e_i,e_j)=\delta_{ij}$ for $i,j=1,\ldots,n$.
		
		For $\Omega_{2m+1}(q)$, $q$ odd, and $\lambda=-1$, take $\diag(-1,\ldots,-1,1)$ with respect to a basis $e_1,\ldots,e_{2m+1}$ such that $f(e_i,e_j)=0$ if $i\neq j$ and $f(e_i,e_i)=1$ for $i=1,\ldots,2m$ and $f(e_{2m+1},e_{2m+1})=1$ or a non-square $\alpha\in\finfield_q^\ast$ (there are two equivalence classes of such forms both giving a natural representation of $\Omega_{2n+1}(q)$). Clearly $h$ has determinant and spinor norm equal to one.
		
		For $H=\Omega_{2n}^{\pm}(q)$, $q$ odd, and $\lambda=-1$, we find a basis $e_1,\ldots,e_{2n}$ such that either $f(e_i,e_j)=\delta_{ij}$ for all $i,j=1,\ldots,2n$, or $f(e_i,e_j)=0$ for $i\neq j$, $f(e_i,e_i)=1$ for $i=1,\ldots,2n-1$ and $f(e_{2n},e_{2n})=\alpha\in\finfield_q^\ast$ a non-square (there are two equivalence classes of non-degenerate symmetric bilinear forms corresponding to the two non-isomorphic groups $\Omega_{2n}^+(q)$ and $\Omega_{2n}^-(q)$).
		In either case we can take $h=\diag(-1,\ldots,-1,1,1)$ with respect to this basis, which has determinant and spinor norm one.
		
		In all remaining cases we have $S(H)=\trivgrp$, so the proof is complete.
	\end{proof}
	
	\section{Relative bounded generation in universal finite quasisimple groups}\label{sec:rel_bd_gen}
	
	In this section we keep the notation from Section~\ref{sec:gmtry}. In addition, we define the following three length functions on our group $H$: The rank length function $\ell^{\rm rk}(h)\defeq\rk(1-h)/n$, the projective rank length function $\ell^{\rm pr}(h)\defeq\min\set{\ell^{\rm rk}(\lambda^{-1}h)}[\lambda\in\finfield_q^\ast\text{ or }\finfield_{q^2}^\ast]$ ($q^2$ when $H$ is unitary), and the conjugacy length function $\ell^{\rm c}(h)\defeq\log\card{h^H}/\log\card{H}$.
	
	In the following we will prove a version of \enquote*{relative bounded generation} for all universal quasisimple groups from families of unbounded rank (for the others there is no such notion). We start with the alternating case. Recall that the normalized Hamming length defined by $\ell^{\rm H}(\sigma)\defeq\card{\supp(\sigma)}/n$ for $\sigma\in A_n$ is Lipschitz equivalent to $\ell^{\rm c}$ by \cite[Theorem~2.11]{stolzthom2014lattice}. We denote the universal central extension of $A_n$ by $\widetilde{A}_n$.

	\begin{lemma}\label{lem:rel_bd_gen_alt_case}
		There exists a constant $c>0$ such that for any $\sigma\in\widetilde{A}_n$, $\tau\not\in\centersubgrp(\widetilde{A}_n)$ for any integer $k\geq c\max\set{\ell^{\rm H}(\nathom{\sigma})/\ell^{\rm H}(\nathom{\tau}),1}$ we have $\sigma\in(\tau^{\widetilde{A}_n})^{\ast k}$. Here $\nathom{\sigma},\nathom{\tau}\in A_n$ are the images of $\sigma,\tau\in\widetilde{A}_n$ under the canonical map.
	\end{lemma}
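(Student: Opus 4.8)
The plan is to bootstrap from the statement for the bare alternating groups, which is already available, to their universal central extensions, controlling the finite kernel $Z \defeq \centersubgrp(\widetilde{A}_n)$. First I would record the version for $A_n$: applying the bounded relative generation lemma for the alternating groups stated in the introduction to the normal subsets $S = \nathom{\sigma}^{A_n}$ and $T = \nathom{\tau}^{A_n}$ (which is nontrivial since $\tau \notin Z$), using $\log\card{\nathom{\sigma}^{A_n}}/\log\card{\nathom{\tau}^{A_n}} = \ell^{\rm c}(\nathom{\sigma})/\ell^{\rm c}(\nathom{\tau})$ together with the Lipschitz equivalence of $\ell^{\rm c}$ and $\ell^{\rm H}$ from \cite[Theorem~2.11]{stolzthom2014lattice}, yields a constant $c_0 > 0$ with $\nathom{\sigma} \in (\nathom{\tau}^{A_n})^{\ast k}$ for every integer $k \geq c_0\max\set{\ell^{\rm H}(\nathom{\sigma})/\ell^{\rm H}(\nathom{\tau}),1}$. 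Fixing such a $k$ and a factorization $\nathom{\sigma} = \bar g_1\cdots\bar g_k$ with $\bar g_i \in \nathom{\tau}^{A_n}$, I would lift each $\bar g_i$ to a conjugate $g_i$ of $\tau$ in $\widetilde{A}_n$; then $g_1\cdots g_k = \zeta\sigma$ for some $\zeta \in Z$.

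Everything now reduces to a \emph{uniform width bound for the kernel}: there should be an absolute constant $c_1>0$ such that $Z \subseteq (\tau^{\widetilde{A}_n})^{\ast j}$ for every non-central $\tau$ and every integer $j \geq c_1$. Granting this, $\sigma = \zeta^{-1}\cdot g_1\cdots g_k \in (\tau^{\widetilde{A}_n})^{\ast(k+j)}$ for any $j \geq c_1$, and choosing $c > c_0 + c_1$ and absorbing the gap to an arbitrary admissible exponent by padding with blocks of conjugates of $\tau$ of trivial product (as in the alternating case) completes the argument. For $n \leq 7$ the width bound is trivial, since only finitely many groups $\widetilde{A}_n$ arise; so assume $n \geq 8$, where $Z = \set{1,z} \iso C_2$. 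If $\tau$ is conjugate to $z\tau$ in $\widetilde{A}_n$, then each $\bar g_i$ has two lifts inside $\tau^{\widetilde{A}_n}$ and we may simply arrange $\zeta = 1$ from the outset. In the remaining case the lifts $g_i$ are forced, and I would produce $z$ directly: since $\nathom{\tau}\neq 1$, choose $\nathom h \in A_n$ supported on a bounded set of points such that $\nathom{\tau}^{\nathom h}$ commutes with $\nathom{\tau}$ but $[\tau,\tau^{\tilde h}] = z$ (for instance, when $\nathom{\tau}$ has two cycles of equal length, let $\nathom h$ invert each of them). Then $[\nathom{\tau},\nathom{\tau}^{\nathom h}]$ lies in a subgroup $A_m \leq A_n$ of bounded rank $m$ whose preimage is the Schur cover $\widetilde{A}_m \to A_m$; the identity $[\tau,\tau^{\tilde h}] = z$ reflects the non-splitting of this extension and is a finite verification; and $[\tau,\tau^{\tilde h}]$ is a product of two conjugates of $\tau^{\pm 1}$, so a short further argument turns this into $z \in (\tau^{\widetilde{A}_n})^{\ast j}$ for all $j \geq c_1$.

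The main obstacle is precisely this width bound: forcing the central involution to be a \emph{bounded} product of conjugates of an arbitrary non-central element. The commutator trick tames the support so that a finite check inside a Schur cover of bounded rank suffices, but one must verify that the product of the lifts of the commuting permutations is the nontrivial central element and not the identity — that is, that the non-splitting is detected by conjugates of $\tau$ — and one must deal with the parity and exact-length bookkeeping and with the interplay between conjugates of $\tau$ and of $\tau^{-1}$. By contrast, the reduction to $A_n$, the Lipschitz comparison, and the enlargement of constants are routine.
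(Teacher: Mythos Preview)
Your overall plan coincides with the paper's: first prove the statement for $A_n$, then lift to $\widetilde{A}_n$ by showing that the center has bounded width in conjugates of any non-central element. Your reduction to $A_n$ via Lemma~1 and the Lipschitz equivalence of $\ell^{\rm c}$ and $\ell^{\rm H}$ is fine (the paper instead argues directly from \cite[Lemma~3.2]{liebeckshalev2001diameters} by restricting to $\Alt(\supp(\nathom{\sigma})\cup\supp(\nathom{\tau}))$, but either route works).

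The genuine gap is in your commutator trick for producing the central involution $z$. Your illustrative example---choosing $\nathom{h}$ so that $\nathom{\tau}^{\nathom{h}}=\nathom{\tau}^{-1}$---always yields $[\tau,\tau^{\tilde h}]=1$: indeed $\tau^{\tilde h}$ is then a lift of $\nathom{\tau}^{-1}$, hence lies in $\{\tau^{-1},z\tau^{-1}\}$, and in either case the commutator with $\tau$ is trivial since $z$ is central. More seriously, in $\widetilde{A}_n$ the lifts of two \emph{disjoint} even permutations always commute: writing each as a product of an even number of transpositions on its own support and using that lifts of disjoint transpositions anticommute in the double cover of $S_n$, the commutation sign is $z^{\text{even}\cdot\text{even}}=1$. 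Hence taking $\nathom{\tau}^{\nathom{h}}$ disjoint from $\nathom{\tau}$ does not help either. When $\nathom{\tau}$ is a single $3$-cycle, every conjugate commuting with it is either a power of it or disjoint from it, and in both cases the lifted commutator is trivial; for such $\tau$ your scheme cannot produce $z$ at all. The paper avoids this with a one-line observation: if $a,b$ are lifts in a double cover of $S_n$ of the disjoint transpositions $(12)$ and $(34)$, then $(ab)^2=z$, so \emph{every} preimage of $(12)(34)$ in $\widetilde{A}_n$ squares to $z$. Since $\card{\supp((12)(34))}=4$ while $\card{\supp(\nathom{\tau})}\geq 3$, the already established $A_n$ statement gives $(12)(34)\in(\nathom{\tau}^{A_n})^{\ast c'}$ for an absolute $c'$; lifting and squaring yields $z\in(\tau^{\widetilde{A}_n})^{\ast 2c'}$, after which the remainder of your argument goes through unchanged.
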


	\begin{proof}		
		We prove the lemma for $A_n$ ($n\geq 5$) and derive the corresponding result for its Schur covering group.
		Let $\sigma,\tau\in A_n$, $\tau\neq 1$. After conjugating $\sigma$, we may assume that either $\supp(\sigma)\subseteq\supp(\tau)$ or the opposite inclusion holds. In the first case, $\sigma\in\Alt(\supp(\tau))=(\tau^{A_n})^{\ast c_1}$ for some integral constant $c_1>0$ and in the second case $\sigma\in\Alt(\supp(\sigma))=(\tau^{A_n})^{\ast k}$ for any integer $k\geq c_2/\ell^{\rm H}_{\Alt(\supp(\sigma))}(\tau)=c_2\ell^{\rm H}(\sigma)/\ell^{\rm H}(\tau)$ with some constant $c_2>0$; both times we use \cite[Lemma~3.2]{liebeckshalev2001diameters}. So we may take $c\defeq\max\set{c_1,c_2}$.
		
		The statement now extends to the Schur cover $\widetilde{A}_n$ by the following argument. Without loss of generality, we may assume $n>7$ (we may neglect this finite data), so $\widetilde{A}_n$ is a twofold cover.
		It follows from the standard construction of the two double covers of $S_n$ that when $a$ and $b$ are lifts of the transposition $(12)$ and $(34)$, then $(ab)^2=z$ is the unique non-trivial central element in one of these. This shows that $\centersubgrp(\widetilde{A}_n)\subseteq(\tau^{\widetilde{A}_n})^{\ast c_3}$ for any non-central $\tau\in\widetilde{A}_n$ and some absolute integer constant $c_3>0$, which implies the claim for $\widetilde{A}_n$.
	\end{proof}

	Now we turn to the universal quasisimple groups from families of unbounded rank. The proof of \enquote*{relative bounded generation} for these is actually very similar as for the Schur covering groups of the alternating groups.
	
	We need the following fact which can be deduced by adapting the proof of Lemma~4.1 of \cite{liebeckshalev2001diameters} to quasisimple groups and looking at Lemma~5.4, 6.4, and the end of Section~7 of the same article.
	
	\begin{lemma}\label{lem:bd_gen}
		There is an absolute constant $c>0$ (independent of $H$) such that for $h\not\in H\setminus Z(H)$, for any $k\geq c/\ell^{\rm pr}(h)$ it holds that $H=(h^H)^{\ast k}$.
	\end{lemma}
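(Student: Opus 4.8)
The plan is to mimic the structure of the Liebeck--Shalev argument for finite simple groups of Lie type, paying attention at each stage to the central extension $Z(H)$ and to the fact that $\ell^{\rm pr}$, not $\ell^{\rm rk}$, is the length that must control the word length. First I would reduce to the case where $h$ has a nontrivial ``support'' in the projective sense, i.e.\ where $\ell^{\rm pr}(h)$ is bounded below away from $0$; since $\ell^{\rm pr}(h) = \min_\lambda \ell^{\rm rk}(\lambda^{-1}h)$, after multiplying $h$ by a quasiscalar we may take an honest representative $h_0$ with $\rk(1-h_0)/n = \ell^{\rm pr}(h)$, and the key point is that $h_0$ and $h$ generate the same normal subgroup modulo the center, so $(h^H)^{\ast k}$ and $(h_0^H)^{\ast k}$ differ only by central corrections. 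Then I would invoke the adapted version of \cite[Lemma~4.1]{liebeckshalev2001diameters}: for an element $g$ of a quasisimple classical group with $\rk(1-g)$ proportional to $n$, a bounded number of conjugates of $g$ already contain a long root element (or its analogue), and from Lemma~5.4, 6.4 and the end of Section~7 of the same paper one gets that boundedly many conjugates of a root element fill out all of $H$. Chaining these, $c_1/\ell^{\rm pr}(h)$ conjugates of $h_0$ suffice to generate $H$, for an absolute constant $c_1$ depending only on the type (linear/symplectic/unitary/orthogonal), hence on nothing once we take the max over the four families.

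The main obstacle is the passage from $h$ to $h_0$ and back, i.e.\ controlling the central element that is lost. Concretely, $(h^H)^{\ast k}$ is a union of cosets of a subgroup of $Z(H)$ only up to the ambiguity introduced by the quasiscalar $\lambda$ realizing the minimum in $\ell^{\rm pr}$; one must check that after $O(1/\ell^{\rm pr}(h))$ products this ambiguity can be absorbed. For this I would use Lemma~\ref{lem:all_qsi_scal_attained}: every quasiscalar $\lambda \in S(H)$ is realized by a diagonalizable element $h_\lambda \in H$ all but two of whose eigenvalues equal $\lambda$, so $h_\lambda$ itself has $\rk(1 - h_\lambda) \le n - (\text{number of }\lambda\text{-eigenvalues})$, which is small when $\lambda$ has few ``missing'' coordinates but could be large in general --- however, since we only need \emph{some} central correction and $H$ is perfect with $Z(H)$ generated by commutators that are themselves products of boundedly many conjugates of any noncentral element (exactly as in the $c_3$ step of the proof of Lemma~\ref{lem:rel_bd_gen_alt_case}), we can simply append a bounded number of extra factors $(h^H)^{\ast c_2}$ to hit the required central element. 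This is the one place where the argument genuinely differs from the simple-group case, and it is resolved by the ``bounded central generation'' observation rather than by a delicate rank estimate.

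Finally, I would assemble the constant: taking $c \defeq \max\{c_1, c_2\}$ over the linear, symplectic, unitary, and orthogonal types gives an absolute $c > 0$, independent of $H$, such that for every noncentral $h$ and every $k \ge c/\ell^{\rm pr}(h)$ we have $H = (h^H)^{\ast k}$, as claimed. The only genuinely new verification beyond citing \cite{liebeckshalev2001diameters} is that Lemmas~4.1, 5.4, 6.4 and Section~7 of that paper, which are stated for the simple groups $\PSL$, $\PSp$, $\PSU$, $\POmega$, lift verbatim to their quasisimple covers --- this is routine because those arguments take place inside a single conjugacy class and a single $\SL_2$-type subgroup, and pulling back along the central isogeny $H \to H/Z(H)$ changes nothing about the rank computations, only inserting central elements which the previous paragraph has already shown how to control.
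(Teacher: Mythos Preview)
Your proposal follows the route the paper indicates: the paper does not actually give a proof but states that the lemma ``can be deduced by adapting the proof of Lemma~4.1 of \cite{liebeckshalev2001diameters} to quasisimple groups and looking at Lemma~5.4, 6.4, and the end of Section~7 of the same article'', and these are exactly the inputs you invoke.

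There is, however, a genuine slip in your first reduction. You propose to replace $h$ by $h_0 \defeq \lambda^{-1}h$, where $\lambda$ realises the minimum in $\ell^{\rm pr}(h)$, and then argue that ``$h_0$ and $h$ generate the same normal subgroup modulo the center''. But the minimising scalar $\lambda$ ranges over all of $\finfield_q^\ast$ (or $\finfield_{q^2}^\ast$ in the unitary case), not over $\centersubgrp(H)$: for instance in $H=\SL_n(q)$ the center is only $\set{\mu\,\id}[\mu^n=1]$, so in general $h_0\notin H$, and speaking of $(h_0^H)^{\ast k}$ or of the normal subgroup it generates inside $H$ is not meaningful. The correct way to run the Liebeck--Shalev argument is to observe that their constructions already take place inside the matrix groups $\SL$, $\Sp$, $\SU$, $\Omega$ and use only the dimension of the largest eigenspace of $h$, which equals $n(1-\ell^{\rm pr}(h))$; no passage to an auxiliary $h_0$ is needed. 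Your subsequent ``bounded central generation'' patch (showing $\centersubgrp(H)\subseteq (h^H)^{\ast c_2}$ for an absolute $c_2$, parallel to the $c_3$ step in Lemma~\ref{lem:rel_bd_gen_alt_case}) is a legitimate way to absorb any central defect left over, but the detour through $h_0$ should simply be dropped rather than repaired.
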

	
	\begin{remark}
		It follows from Lemma~5.3, 6.3, and the end of Section~7 of \cite{liebeckshalev2001diameters} that $\ell^{\rm pr}$ and $\ell^{\rm c}$ are Lipschitz equivalent by a universal Lipschitz constant, so putting $\ell^{\rm c}$ in place of $\ell^{\rm pr}$ in the Lemma~\ref{lem:bd_gen} would also yield a valid statement.
	\end{remark}
	
	Here is now the promised result for almost all other universal quasisimple groups from families of unbounded rank:
	
	\begin{lemma}\label{lem:rk_bd_gen_class_grps}
		Let $\varepsilon>0$ be arbitrary. There exists an absolute constant $C>0$ and a constant $D>0$ only depending on $\varepsilon$ such that the following holds:
		Let $h_1\in H\setminus\centersubgrp(H)$, $h_2\in H$, $\ell^{\rm rk}(h_1)\leq 1-\varepsilon$.
		Then $h_2\in(h_1^H)^{\ast k}$ for all integers $k\geq\max\set{C\ell^{\rm rk}(h_2)/\ell^{\rm rk}(h_1),D}$.
	\end{lemma}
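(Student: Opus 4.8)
The idea is to reduce the statement to Lemma~\ref{lem:bd_gen} by first using the conjugate of $h_1$ to produce, in a bounded number of steps, an element whose fixed space is large and "rigid" enough that we can control a suitable non-singular subspace on which $h_1$ acts as a generator, then bootstrap from that subspace to all of $H$. Concretely, write $r_1 \defeq \ell^{\rm rk}(h_1) = \rk(1-h_1)/n \le 1-\varepsilon$ and $r_2 \defeq \ell^{\rm rk}(h_2)$. Let $U$ denote (a subspace containing) the support of $1-h_1$, so $\dim U = \rk(1-h_1) = r_1 n \le (1-\varepsilon)n$. By Lemma~\ref{lem:subsp_to_nonsing_subsp} applied to $U^\perp$ — which has dimension $\ge \varepsilon n$ — or rather to a controlled large subspace, we can extract a non-singular subspace $W$ of dimension $\ge c'\varepsilon n$ (for some absolute $c'>0$) inside the fixed space of $h_1$; since $W$ is non-singular, $W \oplus W^\perp = V$, and $H_W \defeq$ (the quasisimple group of the same type acting on $W$) sits naturally inside $H$ as isometries that are the identity on $W^\perp$.

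**Key steps.** First I would use Lemma~\ref{lem:inv_non-sing_subsp} (or a direct construction) to find $g \in H$ conjugating $h_1$ so that $h_1$ and $g h_1 g^{-1}$ jointly act nontrivially on a non-singular subspace $X$ of dimension comparable to $\min\{r_1 n, \varepsilon n\}$ while being controlled elsewhere; taking a bounded product of such conjugates (a bounded number $D_0$ depending only on $\varepsilon$) produces an element $h' \in (h_1^H)^{\ast D_0}$ which restricted to some non-singular $X$ of dimension $\Theta(\varepsilon n)$ is a \emph{non-central} element of $H_X$ with $\ell^{\rm pr}_{H_X}(h'|_X)$ bounded below by an absolute constant. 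Second, apply Lemma~\ref{lem:bd_gen} inside $H_X$: since $\ell^{\rm pr}_{H_X}(h'|_X) \ge$ const, there is an absolute $k_0$ with $H_X = ((h'|_X)^{H_X})^{\ast k_0}$, hence $H_X \subseteq (h_1^H)^{\ast D_0 k_0}$, i.e. $(h_1^H)^{\ast C_0}$ for an absolute $C_0$ once we also absorb a factor to kill the action off $X$. Third, to reach $h_2$: pick $\lceil n r_2 / \dim X \rceil \le C r_2 / r_1$ many $H$-conjugates of $H_X$ whose supports cover $\supp(1-h_2)$, write $h_2$ as a product of $\lceil C r_2/r_1\rceil$ elements each supported in one such conjugate (a standard covering/telescoping argument — split $h_2$ along an orthogonal-type decomposition of its support, using non-singularity to realize each piece inside a conjugate of $H_X$), and conclude $h_2 \in (h_1^H)^{\ast k}$ for $k \ge \max\{C r_2/r_1, D\}$ with $D \defeq D_0 k_0 + C_0$.

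**Main obstacle.** The delicate point is Step~1: manufacturing from $h_1$ alone — which may be, say, a transvection with $r_1$ tiny, or $\ell^{\rm pr}(h_1)$ very small while $\ell^{\rm rk}(h_1)$ is not — an element that is genuinely non-central on a \emph{fixed-size} non-singular subspace, while keeping the number of conjugate-multiplications bounded in terms of $\varepsilon$ only. If $r_1$ is as small as $1/n$ one cannot afford to "fill up" a subspace of size $\varepsilon n$ with $O(1)$ conjugates; instead the exponent genuinely grows like $1/r_1$, which is exactly why the bound $C\ell^{\rm rk}(h_2)/\ell^{\rm rk}(h_1)$ (not a constant) appears — so the argument must be organized so that the \emph{same} $C r_2/r_1$ conjugates used to cover $\supp(1-h_2)$ simultaneously do the work of Step~1, rather than treating the two phases separately. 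Handling the low-characteristic orthogonal case (quadratic forms, quasideterminant) and the small-dimensional exclusions from the list in Section~\ref{sec:gmtry} will require the case analysis already packaged into Lemmas~\ref{lem:inv_non-sing_subsp} and~\ref{lem:all_qsi_scal_attained}; I expect those to go through routinely once the counting in Step~3 is set up correctly. Finally, one must double-check that the constant $C$ can be taken \emph{absolute} (independent of $\varepsilon$): the $\varepsilon$-dependence should be confined to $D$, since $C$ only governs the linear-in-$r_2/r_1$ covering rate, which does not see $\varepsilon$.
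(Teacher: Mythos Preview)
Your instinct to reduce to a smaller quasisimple subgroup and invoke Lemma~\ref{lem:bd_gen} there is exactly right, but your two-phase plan (first produce $H_X \subseteq (h_1^H)^{\ast k_0}$ for some $X$ of dimension $\sim r_1 n$, then cover $h_2$ by conjugates of $H_X$) has a genuine gap in Step~3, and once you try to close it you are essentially forced into the paper's argument.

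The problem is your ``standard covering/telescoping'' step: writing $h_2$ as a product of $O(r_2/r_1)$ elements, each lying in some conjugate $H_{X'}$ with $\dim X' \sim r_1 n$, is \emph{not} standard. If the conjugates of $X$ are pairwise orthogonal, any such product is block-diagonal, which $h_2$ need not be (think of a single large Jordan block); if they overlap, you need a row-reduction-type argument whose length is not obviously $O(r_2/r_1)$. The naive repair --- pick some $g \in H_X$ with $\ell^{\rm pr}_H(g)$ as large as possible and apply Lemma~\ref{lem:bd_gen} in $H$ itself --- yields only $k \sim 1/r_1$, not $r_2/r_1$. To recover the correct bound you would have to apply Lemma~\ref{lem:bd_gen} inside $H_Y$ for a non-singular $Y$ of dimension $\sim r_2 n$ containing the support of $1-h_2$; but at that point the intermediate generation of all of $H_X$ is superfluous, and you have arrived at the paper's proof. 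You diagnose this tension correctly in your ``Main obstacle'' paragraph, but you do not resolve it.

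The paper bypasses the two phases entirely. It constructs a \emph{single} non-singular subspace $Y$, of dimension comparable to $\max\{r_1,r_2\}\cdot n$, on whose orthogonal complement \emph{both} $h_1$ and $h_2$ act trivially; thus $h_i = \id_{Y^\perp}\oplus B_i$ with $B_i$ in the quasisimple group $K$ acting on $Y$. The dimension is arranged (by padding $Y$ with a small non-singular piece of the common fixed space) so that $\ell^{\rm rk}_K(B_1)\le 1/2$, whence $\ell^{\rm pr}_K(B_1)=\ell^{\rm rk}_K(B_1)$ by \cite[Proposition~2.13]{stolzthom2014lattice}. One application of Lemma~\ref{lem:bd_gen} inside $K$ gives $B_2\in (B_1^K)^{\ast k}$ for $k \ge c/\ell^{\rm rk}_K(B_1)$; when $r_2 > r_1$ one has $\ell^{\rm rk}_K(B_1)\sim r_1/r_2$, giving $k \sim r_2/r_1$, and this lifts to $h_2\in(h_1^H)^{\ast k}$ immediately. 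For $\SL_n(q)$ the subspace $Y$ is obtained by putting $h_1,h_2$ simultaneously in Jordan form and taking the span of the basis vectors outside their common trivial Jordan blocks; for the form cases one intersects the fixed spaces, extracts a large non-singular part via Lemma~\ref{lem:subsp_to_nonsing_subsp}, and takes $Y$ to be its orthogonal complement. The regime $\ell^{\rm rk}(h_1)\ge \varepsilon$ is handled separately at the outset (there $\ell^{\rm pr}(h_1)\ge\varepsilon$ and Lemma~\ref{lem:bd_gen} applies directly in $H$), and this is the only place the $\varepsilon$-dependent constant $D$ enters --- confirming your closing observation that $C$ should be absolute.
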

	
	\begin{proof}
		First assume that $\varepsilon\leq\ell^{\rm rk}(h_1)\leq 1-\varepsilon$. Then by Proposition 2.13 of \cite{stolzthom2014lattice} it holds that $\ell^{\rm pr}(h_1)\geq\varepsilon$, so by Lemma~\ref{lem:bd_gen} there is $D\in\nats$ only depending on $\varepsilon$ such that $(h_1^H)^{\ast D}=H$.
		
		So we may assume w.l.o.g.~that $\ell^{\rm rk}(h_1)<\varepsilon\leq 1/8$. Assume additionally that $\ell^{\rm rk}(h_2)\leq 1/8$ as well (we will remove this assumption at the end). Set $U_i\defeq\ker(1-h_i)\subspeq V$ and $l_i\defeq\dim U_i$ ($i=1,2$). 
		
		At first we treat the special linear case, i.e.~$H=\SL_n(q)$: Replace $h_1,h_2\in H$ by conjugates in Jordan normal form (where one off-diagonal entry might be different from $0$ or $1$) as matrices with respect to a suitable basis $e_1,\ldots,e_n$ with all $1\times 1$ Jordan blocks corresponding to eigenvalue one in the upper left corner. Assuming there are $m_i$ of these for $h_i$, i.e.~$h_i e_1=e_1,\ldots,h_i e_{m_i}=e_{m_i}$, it is easy to see that $m_i\geq 2l_i-n$ ($i=1,2$). Set $m\defeq\min\set{m_1,m_2}$ and $W'\defeq\gensubsp{e_{m+1},\ldots,e_n}$. Choose $n-m\leq j\leq m$ minimal such that for $X\defeq\gensubsp{e_{m-j+1},\ldots,e_m}$, the space $Y\defeq X\oplus W'$ gives rise to a quasisimple group $K\defeq\SL(Y)$. This is possible since by assumption on $l_1$ and $l_2$ we have $m\geq 3n/4>n/2$ and $\SL(V)$ is quasisimple. Since $\dim W'\geq 1$, there is an absolute constant $d>1$ such that $j\leq d \dim W'=d(n-m)$.
		Then set $Y'\defeq\gensubsp{e_1,\ldots,e_{m-j}}$. The operators $h_1,h_2$ write as $h_i=\id_{Y'}\oplus\id_X\oplus A_i=\id_{Y'}\oplus B_i$ ($i=1,2$) with respect to te decompositions $V=Y'\oplus X\oplus W'=Y'\oplus Y$. Assuming $m_1\leq m_2$ gives $(1+d)^{-1}/2\leq\ell^{\rm rk}(B_1)\leq1/2$, so by Proposition~2.13 of \cite{stolzthom2014lattice} we have $\ell^{\rm pr}(B_1)\geq (1+d)^{-1}/2$ implying the existence of a constant $D\in\nats$ with $B_2\in(B_1^K)^{\ast D}$ which yields $h_2\in(h_1^H)^{\ast D}$ (by Lemma~\ref{lem:bd_gen}). If $m_2<m_1$ the same computation shows $(1+d)^{-1}/2<\ell^{\rm rk}(B_2)$ and $\ell^{\rm rk}(B_1)\leq1/2$, so as previously $\ell^{\rm pr}(B_1)=\ell^{\rm rk}(B_1)$. By Lemma~\ref{lem:bd_gen} there is $c>0$ such that for all integers $k\geq c/\ell^{\rm pr}(B_1)$ we have $K=(B_1^K)^{\ast k}$. Then it holds that $h_2\in(H_1^H)^{\ast k}$ and by
		$$
		2(1+d)c\ell^{\rm rk}(h_2)/\ell^{\rm rk}(h_1)=2(1+d)c\ell^{\rm rk}(B_2)/\ell^{\rm rk}(B_1)>c/\ell^{\rm pr}(B_1)
		$$
		we are done in this case with $C\defeq 2(1+d)c$.
		
		In the other cases, i.e.~$H\neq\SL_n(q)$, the proof is almost identical:
		Define $U\defeq U_1\setmeet U_2$ and $l\defeq\dim U\geq l_1+l_2-n$. From Lemma~\ref{lem:subsp_to_nonsing_subsp} we get $W\subspeq U$ non-singular with $\dim W\geq 2l-n$. Then we infer $\dim W^\perp\leq 2(n-l)$ from $\dim W\geq 2l-n$.
		This implies
		$$
		\dim W^\perp\leq 2(n-l)\leq2(2n-(l_1+l_2))\leq 4(n-\min\set{l_1,l_2})
		$$
		and
		$$
		4\min\set{l_1,l_2}-3n\leq 2(l_1+l_2)-3n\leq 2l-n\leq\dim W.
		$$
		As by assumption $\ell^{\rm rk}(h_1),\ell^{\rm rk}(h_2)\leq 1/8$, implying that $7/8n\leq l_1,l_2$, we obtain $\dim W^\perp\leq \dim W$. Now take $X\leq W$ a non-singular subspace such that $\dim X\geq\dim W^\perp$ is as small as possible such that $Y\defeq X\oplus W^\perp$ gives rise to a classical quasisimple group $K$ from the beginning of Section~\ref{sec:gmtry}. As $\dim W^\perp\geq 1$, then $\dim X\leq d\dim W^\perp$ for some absolute $d>1$. With respect to the decompositions $V=Y^\perp\oplus X\oplus W^\perp=Y^\perp\oplus Y$, the operators $h_1, h_2$ write as $h_i=\id_{Y^\perp}\oplus\id_X\oplus A_i=\id_{Y^\perp}\oplus B_i$ with isometric automorphism $A_i$ of $W^\perp$ and $B_i$ of $Y$ ($i=1,2$).
		Now assume $l_1\leq l_2$. Then
		$$
		\ell^{\rm rk}(B_1)=\frac{\rk(1-A_1)}{\dim W^\perp+\dim X},
		$$
		which can be bounded from above by $1/2$ and from below by the chain
		$$
		\frac{n-l_1}{2(1+d)(n-l)}\geq\frac{n-l_1}{2(1+d)(2n-(l_1+l_2))}\geq\frac{n-l_1}{4(1+d)(n-l_1)}=\frac{1}{4(1+d)}.
		$$
		Hence by Proposition 2.13 of \cite{stolzthom2014lattice} we get that $\ell^{\rm pr}(B_1)\geq (1+d)^{-1}/4$, so by Lemma~\ref{lem:bd_gen} there is a constant $D\in\nats$ such that $B_2\in(B_1^K)^{\ast D}$ implying that $h_2\in(h_1^H)^{\ast D}$.		
		On the other hand, if $l_2<l_1$, by the same computation as above, $0.25(1+d)^{-1}<\ell^{\rm rk}(B_2)$ and $\ell^{\rm rk}(B_1)\leq 1/2$, so by Proposition 2.13 of \cite{stolzthom2014lattice} $\ell^{\rm pr}(B_1)=\ell^{\rm rk}(B_1)$. Applying Lemma~\ref{lem:bd_gen} gives $c>0$ such that for
		all integers $k\geq c/\ell^{\rm pr}(B_1)$ we have $K=(B_1^K)^{\ast k}$. But then $h_2\in(h_1^H)^{\ast k}$ for such $k$, and since 
		$$
		4(1+d)c\ell^{\rm rk}(h_2)/\ell^{\rm rk}(h_1)=4(1+d)c\ell^{\rm rk}(B_2)/\ell^{\rm rk}(B_1)>c/\ell^{\rm pr}(B_1)
		$$ 
		we are done in this case with $C\defeq 4(1+d)c$.
		
		We still need to eliminate the condition $\ell^{\rm rk}(h_2)\leq 1/8$. But this is again just another straightforward application of Lemma~\ref{lem:bd_gen}, since the elements $h\in H$ with $\ell^{\rm rk}(h)> 1/8$ generate $H$ \enquote*{quickly}.
	\end{proof}
	
	\begin{remark}
		The condition $\ell^{\rm rk}(h_1)\leq 1-\varepsilon$ for a fixed $\epsilon>0$ cannot be removed by Example~\ref{exl:non_corr_elmts}. In that sense, the previous result is best possible.
	\end{remark}
	
	\begin{remark}\label{rmk:orth_case_dbl_cover_rel_bd_gen}
		The only universal quasisimple groups from families of unbounded rank which are not covered by Lemma~\ref{lem:rel_bd_gen_alt_case} and \ref{lem:rk_bd_gen_class_grps} are the double covers of the orthogonal groups in odd characteristic.
		
		Defining $\ell^{\rm rk}(h)\defeq\ell^{\rm rk}(\nathom{h})$ for $h$ an element of the twofold cover of $\Omega^\pm_{2m}(q)$ or $\Omega_{2m+1}(q)$ ($q$ odd), the statement of Lemma~\ref{lem:rk_bd_gen_class_grps} also holds for these. This is true since $\widetilde{A}_5$ is embedded in both of them as the group of products of an even number of the reflections with respect to the vectors $e_1-e_2, e_2-e_3, e_3-e_4, e_4-e_5\in V$, where $e_1,\ldots,e_5$ is an orthonormal system for $f$. So we can argue as at the end of the proof of Lemma~\ref{lem:rel_bd_gen_alt_case} to see that the two-element kernel of the covering map is generated \enquote*{quickly} by conjugates of any non-central element.
	\end{remark}
	
	\section{The lattice of normal subgroups of an algebraic ultraproduct of classical finite quasisimple groups}\label{sec:lat}
	
	Let $\seq{H_i}_{i\in I}$ be a sequence of groups from the list at the beginning of Section~\ref{sec:gmtry} and set $G\defeq\prod_{i\in I}{H_i}$. Let $n_i$ be the dimension of the \enquote*{natural} representation $V_i$ of $H_i$. Note that the sequence $\seq{n_i}$ is Lipschitz equivalent to the sequence of Lie ranks of the $H_i$ by a universal Lipschitz constant. For some ultrafilter $\mathcal U$ on $I$ for which $\lim_{\mathcal U}{n_i}=\infty$, define the normal subgroup $N_0\defeq\set{\seq{g_i}\in G}[g_i=1_{H_i}\text{ along }\mathcal U]$ of $G$. In this section we give a complete description of the lattice of normal subgroups of the algebraic ultraproduct $G/N_0$ of the groups $H_i$ ($i\in I$) with respect to $\mathcal U$.
	
	Define the subgroups 
	$$N^{\rm rk}\defeq\set{\seq{g_i}\in G}[\lim_{\mathcal U}{\ell^{\rm rk}(g_i)}=0] \quad \mbox{and} \quad N^{\rm pr}\defeq\set{\seq{g_i}\in G}[\lim_{\mathcal U}{\ell^{\rm pr}(g_i)}=0]
	$$ of $G$ where $\ell^{\rm rk}$ and $\ell^{\rm pr}$ are the length functions defined at the beginning of Section~\ref{sec:rel_bd_gen}. As they are invariant length functions, it is clear that $N^{\rm rk}$ and $N^{\rm pr}$ are normal in $G$ and contain $N_0$. Moreover, as $\ell^{\rm pr}\leq\ell^{\rm rk}$, we get that $N^{\rm rk}\subgrpeq N^{\rm pr}$.
	
	The following result is an immediate consequence of Lemma~\ref{lem:bd_gen}:
	
	\begin{lemma}\label{lem:el_outside_Npr}
		The subgroup $N^{\rm pr}$ contains all proper normal subgroups of $G$ containing $N_0$. Especially, $G/N^{\rm pr}$ is non-abelian simple and $N^{\rm pr}/N_0$ is a characteristic subgroup of $G/N_0$.
	\end{lemma}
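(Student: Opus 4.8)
The plan is to derive the first (and main) assertion from Lemma~\ref{lem:bd_gen} by a routine ultraproduct bootstrap, and to obtain the two ``especially'' clauses as formal consequences.

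\emph{First assertion.} I would argue by contradiction: suppose $N\norsubgrpeq G$ is proper, $N_0\subseteq N$, but $N\not\subseteq N^{\rm pr}$, and fix $\seq{g_i}\in N\setminus N^{\rm pr}$. Since $\ell^{\rm pr}$ takes values in the compact interval $[0,1]$, the ultralimit $\delta\defeq\lim_{\mathcal U}\ell^{\rm pr}(g_i)$ exists, and by the choice of $\seq{g_i}$ it is positive; hence $A\defeq\set{i\in I}[\ell^{\rm pr}(g_i)\geq\delta/2]\in\mathcal U$. With $c$ the absolute constant of Lemma~\ref{lem:bd_gen}, put $k\defeq\lceil 2c/\delta\rceil$. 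For $i\in A$ the element $g_i$ is non-central (as $\ell^{\rm pr}(g_i)>0$) and $k\geq c/\ell^{\rm pr}(g_i)$, so Lemma~\ref{lem:bd_gen} gives $H_i=(g_i^{H_i})^{\ast k}$. Now let $\seq{h_i}\in G$ be arbitrary: for $i\in A$ write $h_i=\prod_{j=1}^{k}y_{i,j}g_iy_{i,j}^{-1}$ with $y_{i,j}\in H_i$, and for $i\notin A$ set $y_{i,j}\defeq 1_{H_i}$. Then $y_j\defeq\seq{y_{i,j}}_i\in G$, and $z\defeq\prod_{j=1}^{k}y_j\seq{g_i}y_j^{-1}$ is a product of $k$ conjugates of $\seq{g_i}\in N$, so $z\in N$; since the $i$-th coordinate of $z$ equals $h_i$ for every $i\in A$, we get $z^{-1}\seq{h_i}\in N_0\subseteq N$, and hence $\seq{h_i}\in N$. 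As $\seq{h_i}$ was arbitrary, $N=G$ --- a contradiction.

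\emph{The two clauses.} First one checks that $N^{\rm pr}\subsetneq G$, which is easy: a regular semisimple element $g_i\in H_i$ satisfies, for every scalar $\lambda$, $\dim\ker(1-\lambda^{-1}g_i)=O(1)$, so $\ell^{\rm pr}(g_i)\geq 1-O(1/n_i)$ stays bounded away from $0$ along $\mathcal U$, whence $\seq{g_i}\notin N^{\rm pr}$. Now if $M\norsubgrpeq G$ with $N^{\rm pr}\subsetneq M$, then $N_0\subseteq N^{\rm pr}\subseteq M$ and $M\not\subseteq N^{\rm pr}$, so by the first assertion $M$ cannot be proper, i.e.\ $M=G$; hence $G/N^{\rm pr}$ is simple. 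It is non-abelian because $G$ is perfect: each $H_i$ is quasisimple and has uniformly bounded commutator width (by the Ore conjecture together with the uniform bound on the Schur multipliers of finite simple groups), so the same bound makes $G=\prod_iH_i$ perfect, and a perfect simple group is not abelian. Finally, by the first assertion $N^{\rm pr}/N_0$ contains every proper normal subgroup of $G/N_0$ and is itself proper, so it is the unique maximal normal subgroup of $G/N_0$; since every automorphism of $G/N_0$ permutes the maximal normal subgroups, it fixes $N^{\rm pr}/N_0$, which is therefore characteristic.

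\emph{Main obstacle.} There is no real obstacle once Lemma~\ref{lem:bd_gen} is granted; the argument is essentially formal. The only ingredients not coming for free are the properness of $N^{\rm pr}$ (exhibit any element with $\ell^{\rm pr}$ bounded away from $0$) and the perfectness of $G$ (the uniform commutator width of finite quasisimple groups), both standard.
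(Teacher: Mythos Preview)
Your argument for the first assertion is exactly the paper's: pick $g\notin N^{\rm pr}$, use the positive ultralimit to get a uniform lower bound on $\ell^{\rm pr}(g_i)$ along a set in $\mathcal U$, and apply Lemma~\ref{lem:bd_gen} with the resulting fixed $k$ to conclude $\gennorsubgrp{g}N_0=G$. The paper states this in two lines and leaves the two ``especially'' clauses entirely to the reader; your added verifications (properness of $N^{\rm pr}$, simplicity, non-abelianness, characteristicness as the unique maximal normal subgroup) are all correct. One minor remark: for perfectness of $G$ you appeal to the Ore conjecture plus bounds on Schur multipliers, which is heavier than necessary---the paper observes later (end of the proof of Corollary~\ref{cor:blw_Nrk_perf} and the remark after it) that Lemma~\ref{lem:bd_gen} alone already gives uniformly bounded commutator width in the $H_i$, via $H_i=(h^{H_i})^{\ast k}h^{-k}\subseteq H_i'$.
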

	
	\begin{proof}
		When $g=\seq{g_i}\in G\setminus N^{\rm pr}$, there exist $\varepsilon>0$ and $U\in\mathcal U$ such that $\ell^{\rm pr}(g_i)\geq\varepsilon$ for $i\in U$. Hence from  Lemma~\ref{lem:bd_gen} it follows that there is $k\in\nats$ such that
		$(g_i^{H_i})^{\ast k}=H_i$ for $i\in U$. This implies $\gennorsubgrp{g}N_0=G$ as wished.
	\end{proof}
	
	Hence from now on, we may restrict to the normal subgroups above $N_0$ which are contained in $N^{\rm pr}$. Let us first characterize $N^{\rm pr}$ among the subgroups of $G$ containing $N^{\rm rk}$. For this recall the definition of $S(H)$ from the end of Section~\ref{sec:gmtry}.
	
	\begin{lemma}\label{lem:char_cent_of_G/Nrk}
		The map $\varphi:N^{\rm pr}\to Z\defeq\prod_{\mathcal U}{S(H_i)}$ defined by $\seq{g_i}\mapsto\nathom{\seq{\lambda_i}}$, where $\lambda_i$ is arbitrary (from $S(H_i)$) if $\ell^{\rm pr}(g_i)\geq 1/4$ and $\lambda_i$ is the unique $\lambda\in\finfield_{q_i}^\ast$ for which $\ell^{\rm rk}(\lambda^{-1}g_i)<1/4$ otherwise, is a surjective homomorphism with kernel $N^{\rm rk}$. Moreover, $Z\iso N^{\rm pr}/N^{\rm rk}=\centersubgrp(G/N^{\rm rk})$.
	\end{lemma}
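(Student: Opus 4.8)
The plan is to check, in order, that $\varphi$ is well defined, is a homomorphism, has kernel $N^{\rm rk}$, and is surjective — so that the first isomorphism theorem gives $Z\iso N^{\rm pr}/N^{\rm rk}$ — and finally to identify this quotient with $\centersubgrp(G/N^{\rm rk})$. \emph{Well-definedness:} for $g=\seq{g_i}\in N^{\rm pr}$ we have $\lim_{\mathcal U}\ell^{\rm pr}(g_i)=0$, so along $\mathcal U$ some scalar $\mu$ satisfies $\ell^{\rm rk}(\mu^{-1}g_i)<1/4$; I would first show such a $\mu$ is unique, for if $\mu,\mu'$ both work then, setting $a\defeq\mu^{-1}g_i$ and $\nu\defeq\mu'^{-1}\mu$, we have $\mu'^{-1}g_i=\nu a$, and since $a$ is the identity on $\ker(1-a)$ — a subspace of dimension $>3n_i/4$ — the operator $1-\nu a$ restricts there to the scalar $1-\nu$, so $\nu\neq 1$ would force $\ell^{\rm rk}(\nu a)>3/4$, a contradiction. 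Thus $\lambda_i$ is well defined along $\mathcal U$ with $\ell^{\rm rk}(\lambda_i^{-1}g_i)=\ell^{\rm pr}(g_i)$, and changing the representative of $g$ modulo $N_0$ alters $\seq{\lambda_i}$ only off $\mathcal U$. It remains to see $\lambda_i\in S(H_i)$: since $g_i$ acts as the scalar $\lambda_i$ on $W\defeq\ker(1-\lambda_i^{-1}g_i)$ and $\dim W>3n_i/4>n_i/2$, the subspace $W$ is not totally isotropic (not totally singular, in the orthogonal case), so picking $v,w\in W$ with $f(v,w)\neq 0$ (resp.\ $Q(v)\neq 0$) and using that $g_i$ is an isometry gives the defining relation for $S(H_i)$ ($\lambda_i^{q_i+1}=1$ in the unitary case, $\lambda_i^2=1$ in the symplectic and orthogonal cases; in the linear case $S(H_i)=\finfield_{q_i}^\ast$, so nothing is needed).

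For the homomorphism property, since scalars are central one has $(\lambda_i\mu_i)^{-1}g_ih_i=(\lambda_i^{-1}g_i)(\mu_i^{-1}h_i)$, whose $\ell^{\rm rk}$ tends to $0<1/4$ along $\mathcal U$ by subadditivity, so by the uniqueness just proved $\lambda_i\mu_i$ is the scalar attached to $g_ih_i$ and $\varphi(gh)=\varphi(g)\varphi(h)$. For the kernel, $\varphi(g)=1$ means $\lambda_i=1$ along $\mathcal U$, i.e.\ $\ell^{\rm rk}(g_i)=\ell^{\rm pr}(g_i)\to 0$, i.e.\ $g\in N^{\rm rk}$; conversely on $N^{\rm rk}\subseteq N^{\rm pr}$ the scalar $\lambda_i=1$ is forced, so $\ker\varphi=N^{\rm rk}$. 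Surjectivity follows from Lemma~\ref{lem:all_qsi_scal_attained}: given $\nathom{\seq{\nu_i}}\in Z$, choose $h_i\in H_i$ diagonalizable with all but two diagonal entries equal to $\nu_i$, so $\ell^{\rm rk}(\nu_i^{-1}h_i)\leq 2/n_i\to 0$ along $\mathcal U$ (as $\lim_{\mathcal U}n_i=\infty$); then $\seq{h_i}\in N^{\rm pr}$ and $\varphi(\seq{h_i})=\nathom{\seq{\nu_i}}$. Hence $Z\iso N^{\rm pr}/N^{\rm rk}$.

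Finally I would identify $N^{\rm pr}/N^{\rm rk}$ with $\centersubgrp(G/N^{\rm rk})$. For the inclusion $N^{\rm pr}/N^{\rm rk}\subseteq\centersubgrp(G/N^{\rm rk})$, take $g\in N^{\rm pr}$ and $x\in G$ and write $g_i=\lambda_i z_i$ with $z_i\defeq\lambda_i^{-1}g_i$ along $\mathcal U$; the scalar cancels in the commutator, so $[g_i,x_i]=[z_i,x_i]$, and subadditivity together with conjugation-invariance of $\ell^{\rm rk}$ give $\ell^{\rm rk}([z_i,x_i])\leq 2\ell^{\rm rk}(z_i)=2\ell^{\rm pr}(g_i)\to 0$, so $[g,x]\in N^{\rm rk}$. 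For the reverse inclusion I would argue as in Lemma~\ref{lem:el_outside_Npr}: if $gN^{\rm rk}$ is central in $G/N^{\rm rk}$ but $g\notin N^{\rm pr}$, then $\gennorsubgrp{g}N_0=G$ (by the proof of that lemma), hence $\gennorsubgrp{g}N^{\rm rk}=G$ since $N_0\subseteq N^{\rm rk}$; but the latter is the normal closure of the central element $gN^{\rm rk}$, so $G/N^{\rm rk}$ would be abelian, contradicting that it surjects onto the non-abelian simple group $G/N^{\rm pr}$. Hence $\centersubgrp(G/N^{\rm rk})\subseteq N^{\rm pr}/N^{\rm rk}$, and equality holds.

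I expect the only genuinely delicate step to be well-definedness — specifically, showing that the multiplier $\lambda_i$ is forced to lie in $S(H_i)$ — which hinges on the (easy but crucial) geometric fact that a subspace of dimension exceeding $n_i/2$ is never totally isotropic or totally singular, so that the isometry property of $g_i$ pins down the multiplier of $\lambda_i$ with respect to the form. Everything else is routine bookkeeping with the subadditive, conjugation-invariant length function $\ell^{\rm rk}$ and the two auxiliary lemmas of Section~\ref{sec:gmtry}.
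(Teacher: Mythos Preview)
Your proof is correct and follows the same overall architecture as the paper's: verify well-definedness, the homomorphism property, kernel, surjectivity (via Lemma~\ref{lem:all_qsi_scal_attained}), and then identify the center. Two of your tactical choices are in fact slightly more economical than the paper's. First, to see that $\lambda_i\in S(H_i)$ the paper invokes Lemma~\ref{lem:subsp_to_nonsing_subsp} to produce a non-singular subspace of the $\lambda_i$-eigenspace, whereas you observe directly that a subspace of dimension exceeding $n_i/2$ cannot be totally isotropic (resp.\ totally singular), and a single nonzero value of $f$ (resp.\ $Q$) already forces the quasiscalar condition. Second, for $[G,N^{\rm pr}]\subseteq N^{\rm rk}$ the paper argues geometrically by intersecting the $\lambda_i$-eigenspace with its translate under $g_i$, while you simply cancel the scalar in the commutator and bound $\ell^{\rm rk}([z_i,x_i])\leq 2\ell^{\rm rk}(z_i)$ via subadditivity and conjugation invariance; this is cleaner and uses only the metric axioms. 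The paper, on the other hand, handles the reverse inclusion $\centersubgrp(G/N^{\rm rk})\subseteq N^{\rm pr}/N^{\rm rk}$ implicitly by quoting that $N^{\rm pr}$ contains every proper normal subgroup above $N_0$ (Lemma~\ref{lem:el_outside_Npr}), which is marginally quicker than your contradiction argument, but both are immediate consequences of that lemma.
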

	
	\begin{proof}
		At first we check that $\lambda_i$ is always in $S(H_i)$: This is clear when $\ell^{\rm pr}(g_i)\geq 1/4$. In the special linear case there is nothing to check, so consider the remaining cases.
		Set $U_i$ to be the eigenspace corresponding to eigenvalue $\lambda_i$ of $g_i$ and set $\dim U_i\defeq l_i>3/4n_i$. Then by Lemma~\ref{lem:subsp_to_nonsing_subsp} there is $W_i\subspeq U_i$ non-singular such that $\dim W_i\geq 2l_i-n_i>n_i/2\geq 1$. The restriction $\rest{g_i}_{W_i}$ is a scalar from the group $\GI(W_i,\rest{f_i}_{W_i})$, so it must lie in $S(H_i)$.
		
		Let us now show that $\varphi$ is a homomorphism. So let $\seq{g_i},\seq{h_i}\in N^{\rm pr}$ and pick $U\in\mathcal U$ and sequences $\seq{\lambda_i},\seq{\mu_i}$ from $\prod_i{S(H_i)}$ such that $$
		\ell^{\rm rk}(\lambda_i^{-1}g_i),\ell^{\rm rk}(\mu_i^{-1}h_i)<1/8
		$$ 
		for $i\in U$ (this is possible by the definition of $N^{\rm pr}$). Then it follows that $\ell^{\rm rk}((\lambda_i\mu_i)^{-1}g_ih_i)<1/4$ for $i\in U$ by the triangle inequality, establishing that $\varphi$ is a homomorphism.
		
		Also $\varphi$ is surjective: Let $\lambda=\nathom{\seq{\lambda_i}}\in Z$. In any case, by Lemma~\ref{lem:all_qsi_scal_attained} there is a diagonal matrix in $H_i$ with all but at most two entries of the diagonal equal to $\lambda_i$. This is the desired preimage of $\lambda$.
		
		Moreover, the kernel of $\varphi$ consists of sequences $g=\seq{g_i}$ such that when $\nathom{\seq{\lambda_i}}$ is the image of $g$ under $\varphi$, then $\lambda_i=1$ for $i\in U$ and some $U\in\mathcal U$. But this means precisely that $g\in N^{\rm rk}$, since then $0=\lim_{\mathcal U}{\ell^{\rm pr}(g_i)}=\lim_{\mathcal U}{\ell^{\rm rk}(\lambda_i^{-1}g_i)}=\lim_{\mathcal U}{\ell^{\rm rk}(g_i)}$.
		
		Lastly, we have to show that $N^{\rm pr}/N^{\rm rk}$ is the center of $G/N^{\rm rk}$. Since by Lemma~\ref{lem:el_outside_Npr} the quotient $G/N^{\rm pr}$ is non-abelian simple, it suffices to show that $\commutator{G,N^{\rm pr}}\subseteq N^{\rm rk}$. So let $g=\seq{g_i}\in G$ and $h=\seq{h_i}\in N^{\rm pr}$ and let $\seq{\lambda_i}$ be sequence in $\finfield_{q_i}^\ast$ such that $\seq{\lambda_i^{-1}h_i}\in N^{\rm rk}$. Let $U_i$ be the eigenspace of $h_i$ corresponding to eigenvalue $\lambda_i$ and set $W_i\defeq U_i\setmeet g_i^{-1} U_i$. Then for $w\in W_i$ one computes that $\commutator{g_i,h_i}w=g_i^{-1}h_i^{-1}g_i h_i w=w$. As $\dim W_i/n_i$ tends to one along $\mathcal U$, this verifies that $\commutator{g,h}\in N^{\rm rk}$.	The proof is complete.
	\end{proof}
	
	To describe the normal subgroups $N$ of $N^{\rm pr}$ containing $N_0$ accurately, we need some definitions:
	Define $\mathcal L$ to be the set of null sequences $\seq{r_i}$ along $\mathcal U$ with $r_i\in[0,1]$ ($i\in I$). Two such sequences $\seq{r_i}$ and $\seq{s_i}$ are said to be equivalent (write $\seq{r_i}\sim\seq{s_i}$ and $\nathom{\seq{r_i}}$ for the corresponding equivalence class) if $\lim_{\mathcal U}{r_i/s_i}\in(0,\infty)$. For two elements $r$ and $s$ of the quotient $\mathcal L/\mathord\sim$ write $r\leq s$ if an only if $\lim_{\mathcal U}{r_i/s_i}<\infty$, where $\seq{r_i}$ and $\seq{s_i}$ are representatives for $r$ and $s$, respectively. It is routine to check that this is a definition and turns $(\mathcal L/\mathord\sim,\leq)$ into a linear order.
	
	Now define the function $\ct:N^{\rm pr}\to\mathcal L/\mathord\sim$ by $g=\seq{g_i}\mapsto\nathom{\seq{\ell^{\rm pr}(g_i)}}$ and call it the \newnotion{convergence type} of $g$. Denote by $L$ the subset of elements $r$ of $\mathcal L/\mathord\sim$ for which either $r\geq\nathom{\seq{1/n_i}}$ or $r=\nathom{\seq{0}}$.
	
	\begin{lemma}\label{lem:ct_Npr_onto_L}
		The image of the function $\ct:N^{\rm pr}\to\mathcal L/\mathord\sim$ is equal to $L$.
	\end{lemma}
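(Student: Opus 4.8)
The plan is to prove the two inclusions separately.

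\emph{Image $\subseteq L$.} The key point is that $\ell^{\rm rk}(h)=\rk(1-h)/n$, and hence $\ell^{\rm pr}(h)=\min_\lambda\rk(\lambda\id-h)/n$ (the minimum taken over all nonzero scalars $\lambda$), only takes values in $\{0,1/n,2/n,\ldots,1\}$, since $\rk(\lambda\id-h)$ is a non-negative integer. So let $g=\seq{g_i}\in N^{\rm pr}$. If $\ell^{\rm pr}(g_i)=0$ along $\mathcal U$ — equivalently, $g_i$ is a scalar matrix along $\mathcal U$ — then $\ct(g)=\nathom{\seq{0}}\in L$. Otherwise $\ell^{\rm pr}(g_i)\ge 1/n_i$ along $\mathcal U$, so $\lim_{\mathcal U}(1/n_i)/\ell^{\rm pr}(g_i)\le 1<\infty$, which is exactly the assertion $\nathom{\seq{1/n_i}}\le\ct(g)$; hence $\ct(g)\in L$ again.

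\emph{$L\subseteq$ image.} The type $\nathom{\seq{0}}$ equals $\ct(1_G)$, so fix $r\in L$ with $r\ne\nathom{\seq{0}}$ and choose a representative $\seq{r_i}$ of $r$ with $0<r_i\le 1$ for all $i$ (possible since $r\ne\nathom{\seq{0}}$) and $r_i\to 0$ along $\mathcal U$; from $\nathom{\seq{1/n_i}}\le r$ we obtain a constant $M>0$ with $n_ir_i\ge 1/M$ along $\mathcal U$. I would construct, for $i$ along $\mathcal U$, an element $g_i\in H_i$ whose fixed subspace $\ker(1-g_i)\subspeq V_i$ has dimension \emph{exactly} $n_i-k_i$, where $k_i$ is an integer with $r_in_i\le k_i\le r_in_i+C_0$ for a suitable absolute constant $C_0$; for the remaining indices, which form a set not in $\mathcal U$, set $g_i\defeq 1_{H_i}$. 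Granting this, $\ell^{\rm rk}(g_i)=k_i/n_i$; and since $k_i/n_i\le r_i+C_0/n_i\le(1+C_0M)r_i$ tends to $0$ along $\mathcal U$, we have $k_i<n_i/2$ along $\mathcal U$, so for every scalar $\lambda\ne 1$ the operator $\lambda\id-g_i$ is injective on the $(n_i-k_i)$-dimensional space $\ker(1-g_i)$, whence $\rk(\lambda\id-g_i)\ge n_i-k_i>k_i$ and therefore $\ell^{\rm pr}(g_i)=\ell^{\rm rk}(g_i)=k_i/n_i$ along $\mathcal U$. As $r_i\le k_i/n_i\le(1+C_0M)r_i$ along $\mathcal U$, the sequence $g\defeq\seq{g_i}$ lies in $N^{\rm pr}$ and $\ct(g)=\nathom{\seq{k_i/n_i}}=\nathom{\seq{r_i}}=r$, as required.

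It remains to construct the $g_i$, and this short case analysis is where the only real work lies. When $\rchar(\finfield_{q_i})\ne 2$ one can take, uniformly across all four families, $g_i\defeq(-\id_{W_i})\oplus\id_{W_i^\perp}$, where $W_i\subspeq V_i$ is a subspace of even dimension $k_i$ which is non-singular in cases (ii)--(iv) and, in the orthogonal case (iv), has square discriminant (such a $W_i$ exists because $k_i<n_i/2$ leaves ample room); then $\det g_i=(-1)^{k_i}=1$, while the spinor norm of $g_i$ equals the discriminant of $W_i$ modulo squares, so $g_i\in H_i$, and $\ker(1-g_i)=W_i^\perp$ has dimension $n_i-k_i$ exactly. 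When $\rchar(\finfield_{q_i})=2$ (so $H_i$ is one of $\SL_n(q)$, $\Sp_{2m}(q)$, $\SU_n(q)$, $\Omega^\pm_{2m}(q)$ with $q$ even) one takes instead $g_i\defeq h_i\oplus\id_{W_i^\perp}$ for $W_i$ non-singular of even dimension $k_i$ and $h_i$ a fixed-point-free element of the corresponding classical group on $W_i$ — such $h_i$ exist once $\dim W_i\ge 2$, e.g.\ a Singer-type element or a sum of fixed-point-free rotations of anisotropic planes — and then $g_i\in H_i$ because determinant, spinor norm, resp.\ Dickson invariant are additive over the orthogonal direct sum. In either regime the only step demanding care is ensuring that the determinant, spinor norm, resp.\ quasideterminant of $g_i$ is trivial, which one arranges precisely as in the proof of Lemma~\ref{lem:inv_non-sing_subsp}, enlarging $C_0$ by an absolute amount if necessary to absorb the parity and discriminant conditions this forces on $k_i$. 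This determinant/spinor-norm bookkeeping is the main obstacle; everything else is a direct computation.
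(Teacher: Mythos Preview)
Your proof is correct and follows essentially the same strategy as the paper's: for the containment $\mathrm{image}\subseteq L$ you use the same integrality observation, and for surjectivity you build a block-diagonal element $g_i=h_i\oplus\id$ with the non-identity block of controlled size, then note $\ell^{\rm pr}=\ell^{\rm rk}$ for such elements since the identity block dominates. The paper carries this out only for $H_i=\SL_{n_i}(q_i)$ (taking $h_i$ to be a direct sum of copies of a fixed nontrivial element of $\SL_2(q)$) and declares the remaining families analogous; your treatment is more explicit in working through the form-preserving constraints (determinant, spinor norm, quasideterminant) across all four families, which is exactly the bookkeeping the paper suppresses.
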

	
	\begin{proof}
		At first we prove that the image of $\ct$ lies in $L$: Namely when $r=\nathom{\seq{r_i}}\in N^{\rm pr}$ and $r\neq\nathom{\seq{0}}$, then $r_i\neq 0$ for all $i\in U$ for some $U\in\mathcal U$. But then it follows that $r_i\geq 1/n_i$ for these $i$ and hence $r\geq\nathom{\seq{1/n_i}}$.
		
		The surjectivity is a bit more subtle. We prove it here only for the case that $H_i=\SL_{n_i}(q_i)$; the other cases are analogous. Choose a non-trivial element $g\in\SL_2(q)$. Then if $r=\nathom{\seq{r_i}}$ satisfies $r_i\geq 1/n_i$ along $\mathcal U$, set $g_i\defeq g^{\oplus \floor{n_i r_i}}\oplus\id_{n_i-2\floor{n_i r_i}}$ if $n_i\geq 2\floor{n_i r_i}$ and choose an arbitrary element otherwise. Letting $g=\seq{g_i}$, one sees immediately that $\ct(g)=r$. Finally, $\ct(1_G)=\nathom{\seq{0}}$ completes the proof.
	\end{proof}
	
	\begin{remark}
		The rank length function $\ell^{\rm rk}$ does not attain any possible value on the classical quasisimple groups even for large ranks: E.g.~$2n\ell^{\rm rk}(g)$ for $g\in\GO^{\varepsilon}_{2n}(q)$ is even if and only if $g\in\Omega^\varepsilon_{2n}$ (see \cite[p.~77]{wilson2009finite} at the end of Section~3.8.1).
	\end{remark}
	
	Due to the previous lemma, henceforth we shall consider $\ct$ as a function with domain $N^{\rm pr}$ and codomain $L$. We extend this function to subsets $S\subseteq N^{\rm pr}$ by setting $\ct(S)\defeq\set{\ct(g)}[g\in S]$. For a normal subgroup $N$ of $G$ between $N_0$ and $N^{\rm pr}$ the set $\ct(N)$ is then called the associated order ideal (see Lemma~\ref{lem:nor_subgrp_blw_Nrk} below).
	
	\begin{lemma}\label{lem:ct_Npr_to_Nrk}
		For such a subgroup it holds that $\ct(N)=\ct(N\setmeet N^{\rm rk})$.
	\end{lemma}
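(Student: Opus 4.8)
The plan is as follows. One inclusion is free: since $N\setmeet N^{\rm rk}\subgrpeq N$ we have $\ct(N\setmeet N^{\rm rk})\subseteq\ct(N)$. For the reverse inclusion I would fix $g=\seq{g_i}\in N$ and look for a witness of the form $h=\commutator{g,x}$ with $x=\seq{x_i}\in G$: any such $h$ automatically lies in $N$ (because $N\norsubgrpeq G$) and in $N^{\rm rk}$ (because $g\in N\subgrpeq N^{\rm pr}$ and $\commutator{G,N^{\rm pr}}\subgrpeq N^{\rm rk}$ by the proof of Lemma~\ref{lem:char_cent_of_G/Nrk}), so the whole task reduces to choosing $x$ with $\ct(\commutator{g,x})=\ct(g)$. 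If $\ct(g)=\nathom{\seq{0}}$ one may take $h=1$; so assume $\ct(g)\neq\nathom{\seq{0}}$, and, passing to $\mathcal U$, that $r_i\defeq\ell^{\rm pr}(g_i)<1/4$ and $g_i\notin\centersubgrp(H_i)$ for every $i$. Let $\lambda_i$ be a scalar with $\ell^{\rm rk}(\lambda_i^{-1}g_i)=r_i$, and set $U_i\defeq\ker(\lambda_i-g_i)$, $l_i\defeq\dim U_i=n_i(1-r_i)>\tfrac34 n_i$ and $d_i\defeq n_i-l_i\geq 1$.

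The next step is an elementary computation of $\ell^{\rm pr}(\commutator{g_i,x_i})$. Multiplying by the invertible matrix $g_i$ shows $n_i\,\ell^{\rm pr}(\commutator{g_i,x_i})=\min_{\mu}\rk(\mu g_i-g_i^{x_i})$. For $\mu=1$ one has $\rk(g_i-g_i^{x_i})\leq 2d_i$, since $g_i-\lambda_i$ and $g_i^{x_i}-\lambda_i=x_i^{-1}(g_i-\lambda_i)x_i$ both have rank $d_i$; while for $\mu\neq 1$, writing $\mu g_i-g_i^{x_i}=\lambda_i(\mu-1)\id+\mu(g_i-\lambda_i)-(g_i^{x_i}-\lambda_i)$ gives $\rk(\mu g_i-g_i^{x_i})\geq n_i-2d_i>n_i/2$. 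As $r_i<1/4$, the minimum is attained at $\mu=1$, so $\ell^{\rm pr}(\commutator{g_i,x_i})=\rk(g_i-g_i^{x_i})/n_i\leq 2r_i$ for every $x_i$. For a matching lower bound, note that if $E_i\subspeq U_i$ and $W_i\defeq x_i^{-1}(E_i)$, then $g_i^{x_i}w=x_i^{-1}(\lambda_i x_i w)=\lambda_i w$ for $w\in W_i$ (because $x_i w\in E_i\subseteq U_i$), so $(g_i^{x_i}-g_i)|_{W_i}=-(g_i-\lambda_i)|_{W_i}$ has rank $\dim E_i-\dim(W_i\setmeet U_i)$, and hence $\rk(g_i-g_i^{x_i})\geq\dim E_i-\dim(W_i\setmeet U_i)$.

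Thus it suffices to produce, for each $i$, an element $x_i\in H_i$ and a subspace $E_i\subspeq U_i$ with $\dim E_i\asymp d_i$ and $\dim\bigl(x_i^{-1}(E_i)\setmeet U_i\bigr)\leq\tfrac12\dim E_i$; then $\ell^{\rm pr}(\commutator{g_i,x_i})\in[\tfrac12 r_i,2r_i]$ along $\mathcal U$, so $\ct(\commutator{g,x})=\nathom{\seq{r_i}}=\ct(g)$ and the proof is complete (for the remaining indices, with $r_i\geq 1/4$, set $x_i\defeq 1$; they are irrelevant to the convergence type). In the linear case $H_i=\SL_{n_i}(q_i)$ this is immediate from transitivity of $\SL_{n_i}(q_i)$ on subspaces of a fixed dimension: take $E_i\subspeq U_i$ of dimension $\max\set{1,d_i}\leq d_i=\dim V_i-\dim U_i$ and choose $x_i$ so that $x_i^{-1}(E_i)$ meets $U_i$ only in $0$. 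In the remaining families I would take $E_i\subspeq U_i$ non-singular of dimension $e_i\in\set{d_i,d_i+1}$ (chosen of the parity, or type, the form requires when $d_i\geq 2$, and $e_i=2$ when $d_i=1$); such $E_i$ exists because $U_i$ contains a non-singular subspace of dimension $2l_i-n_i\geq e_i$ by Lemma~\ref{lem:subsp_to_nonsing_subsp}, and a non-degenerate formed space contains non-degenerate subspaces of all smaller admissible dimensions. Then $E_i^\perp$ is non-singular of dimension $n_i-e_i>e_i$, and $D_i\defeq U_i\setmeet E_i^\perp$ is a subspace of $E_i^\perp$ of codimension $d_i$; feeding the construction in the proof of Lemma~\ref{lem:inv_non-sing_subsp} an isometric copy $W_i\subspeq E_i^\perp$ of $E_i$ chosen in general position with respect to $D_i$ yields $x_i\in H_i$ interchanging $E_i$ and $W_i$ with $\dim\bigl(x_i^{-1}(E_i)\setmeet U_i\bigr)=\dim(W_i\setmeet D_i)=\max\set{0,e_i-d_i}\leq 1\leq\tfrac12 e_i$.

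The step I expect to be the main obstacle is this last one: checking that in each family of forms $E_i^\perp$ really does contain an isometric copy of $E_i$ in general position with respect to the codimension-$d_i$ subspace $D_i$, so that the construction of Lemma~\ref{lem:inv_non-sing_subsp} (whose proof is flexible enough to accept a prescribed copy $W_i$, as $\dim E_i^\perp>\dim E_i$) can be applied. This is a Witt-extension-plus-dimension-count argument, but it must be run separately for the symplectic, unitary and orthogonal cases, with attention to the parity of $e_i$ in the symplectic case and to the isometry type of $E_i$ in the orthogonal case (where it may be convenient to take $e_i=d_i-1$ to leave room); and when $d_i=1$ one must verify that the two-dimensional chunk $E_i$ can be moved so that its image meets $U_i$ in a line rather than being swallowed by $U_i$. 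All of this is routine formed-space linear algebra, but it is where the genuine work lies.
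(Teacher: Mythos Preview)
Your strategy is the paper's: take $h=\commutator{g,x}$, which lands in $N\setmeet N^{\rm rk}$ automatically, and build $x$ with Lemma~\ref{lem:inv_non-sing_subsp} so that $\ct(\commutator{g,x})=\ct(g)$. The difference is only in which non-singular subspace you feed into Lemma~\ref{lem:inv_non-sing_subsp}, and the paper's choice is cleaner in a way that makes your ``main obstacle'' disappear.

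Instead of picking a small non-singular $E_i\subspeq U_i$ and then hunting for an isometric copy of $E_i$ inside $E_i^\perp$ in general position with respect to $D_i=U_i\setmeet E_i^\perp$, the paper uses Lemma~\ref{lem:subsp_to_nonsing_subsp} to take a \emph{large} non-singular $W_i\subspeq U_i$ (with $\dim W_i\geq 2l_i-n_i>n_i/2$, arranged so $\dim W_i^\perp\geq 2$) and applies Lemma~\ref{lem:inv_non-sing_subsp} to the small complement $W_i^\perp$. That lemma then produces a decomposition $W_i=W_i^1\oplus W_i^2$ and an $h_i\in H_i$ swapping $W_i^\perp$ with $W_i^1$ and fixing $W_i^2$; no general-position requirement is needed because $W_i^1\subseteq W_i\subseteq U_i$ comes for free. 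A direct calculation on $W_i^\perp$ and on $W_i^1=h_i(W_i^\perp)$ then gives $\ell^{\rm rk}(\commutator{g_i,h_i})=2\ell^{\rm pr}(g_i)<1/2$, so Proposition~2.13 of \cite{stolzthom2014lattice} yields $\ell^{\rm pr}(\commutator{g_i,h_i})=\ell^{\rm rk}(\commutator{g_i,h_i})$ and hence $\ct(\commutator{g,h})=\ct(g)$ on the nose. In short: your Witt-extension-plus-dimension-count in each family is correct in principle but unnecessary; swapping the \emph{complement} of a big non-singular piece of the eigenspace, rather than a small piece of the eigenspace itself, removes the need for any positioning argument.
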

	
	\begin{proof}
		It is clear that $\ct(N)\supseteq\ct(N\setmeet N^{\rm rk})$. Let us prove the converse containment to finish the proof: Let $g=\seq{g_i}\in N\subgrpeq N^{\rm pr}$. Then there is $U\in\mathcal U$ and $\lambda_i\in\finfield_{q_i}^\ast$ ($i\in I$) such that $\ell^{\rm pr}(g_i)=\ell^{\rm rk}(\lambda_i^{-1}g_i)<1/4$ for $i\in U$ and so when $U_i$ is the eigenspace to $\lambda_i$ for such an $i$, then $l_i\defeq\dim U_i>3/4n_i$. By Lemma~\ref{lem:subsp_to_nonsing_subsp} there is a non-singular subspace $W_i\subspeq U_i$ with $\dim W_i\geq 2l_i-n_i> n_i/2$. Moreover, for all large $n_i$ we may also assume that $n_i-\dim W_i=\dim W_i^\perp\geq 2$ (by modifying $W_i$ a little). 
		Now Lemma~\ref{lem:inv_non-sing_subsp} implies the existence of $h_i\in H_i$ ($i\in I$) and $W_i^1,W_i^2\subspeq W_i$ ($i\in U$) non-singular such that $W_i^\perp\oplus W_i^1\oplus W_i^2$ is a perpendicular direct sum and $h_i$ restricts to the identity on $W_i^2$ while it interchanges $W_i^\perp$ and $W_i^1$. Then it holds that $\commutator{g_i,h_i}(h_i^{-1}w)=g_i^{-1}h_i^{-1}g_iw=\lambda_i^{-1}h_i^{-1}g_iw=\lambda_i^{-1}g_i^{h_i}(h_i^{-1}w)$ and $\commutator{g_i,h_i}w=g_i^{-1}h_i^{-1}g_ih_iw=\lambda_i g_i^{-1}w$ for $w\in W_i^\perp$ and $i\in U$. This implies that $\ell^{\rm rk}(\commutator{g_i,h_i})=2\ell^{\rm pr}(g_i)<1/2$. But then by Proposition~2.13 of \cite{stolzthom2014lattice} we have $\ell^{\rm pr}(\commutator{g_i,h_i})=\ell^{\rm rk}(\commutator{g_i,h_i})$ and by the end of Lemma~\ref{lem:char_cent_of_G/Nrk}, setting $h\defeq\seq{h_i}$, the commutator $\commutator{g,h}$ is in $N^{\rm rk}$ (and of course in $N$). This element has the same convergence type as $g$.
	\end{proof}
	
	\begin{remark}\label{rmk:ct_Nrk_onto_L}
		In view of Lemma~\ref{lem:ct_Npr_onto_L}, this implies that even the restriction $\rest{\ct}_{N^{\rm rk}}:N^{\rm rk}\to L$ is surjective.
	\end{remark}
	
	Now we are ready to justify the name \enquote*{associated order ideal} for $\ct(N)$ for $N$ normal in $G$ lying between $N_0$ and $N^{\rm pr}$.
	
	\begin{lemma}\label{lem:nor_subgrp_blw_Nrk}
		Let $N$ be a normal subgroup between $N_0$ and $N^{\rm pr}$. Then $\ct(N)$ is an order ideal of $L$. Moreover, the mappings
		$$
		\begin{tikzcd}
		\set{N\norsubgrpeq G}[N_0\subgrpeq N\subgrpeq N^{\rm rk}] \arrow[r,yshift=0.7ex,"\alpha"] & \set{\text{order ideals of }(L,\leq)} \arrow[l,yshift=-0.7ex,"\beta"] 
		\end{tikzcd}
		$$
		defined by $\alpha:N\mapsto\ct(N)$ and $\beta:I\mapsto\set{g\in N^{\rm rk}}[\ct(g)\in I]$ are isomorphisms of posets and mutually inverse to each other.
	\end{lemma}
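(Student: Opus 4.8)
The plan is to isolate one technical fact — call it the \emph{absorption property} — and then do bookkeeping. The absorption property I want is: \emph{if $N\norsubgrpeq G$ with $N_0\subgrpeq N\subgrpeq N^{\rm rk}$, if $g\in N^{\rm rk}$, and if $\ct(g)\leq\ct(g')$ in $L$ for some $g'\in N$, then $g\in N$.} I would establish it as follows. First I record that on $N^{\rm rk}$ one has $\ell^{\rm pr}(h)=\ell^{\rm rk}(h)$ coordinatewise along $\mathcal U$: once $\ell^{\rm rk}(h)<1/4$, no scalar $\lambda\neq 1$ can lower the rank, since $\ell^{\rm rk}(\lambda^{-1}h)<1/4$ together with the triangle inequality would force $\ell^{\rm rk}(\lambda\id)<1/2$, contradicting $\ell^{\rm rk}(\lambda\id)=1$; hence on $N^{\rm rk}$ the convergence type is represented by $\seq{\ell^{\rm rk}(g_i)}$. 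If $\ct(g')=\nathom{\seq{0}}$, then $g'_i$ is central along $\mathcal U$ and, being also in $N^{\rm rk}$, equals $1$ along $\mathcal U$, so $g'\in N_0$; then $\ct(g)=\nathom{\seq{0}}$ forces $g\in N_0\subgrpeq N$. Otherwise $g'_i\notin\centersubgrp(H_i)$ along $\mathcal U$, while $g'\in N^{\rm rk}$ gives $\ell^{\rm rk}(g'_i)\leq 1/2$ along $\mathcal U$, and $\ct(g)\leq\ct(g')$ gives $\lim_{\mathcal U}\ell^{\rm rk}(g_i)/\ell^{\rm rk}(g'_i)<\infty$. Feeding $\varepsilon=1/2$, $h_1=g'_i$, $h_2=g_i$ into Lemma~\ref{lem:rk_bd_gen_class_grps} produces an integer $k$ that is \emph{independent of $i$} with $g_i\in((g'_i)^{H_i})^{\ast k}$ for all $i$ in some $U\in\mathcal U$. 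Replacing the coordinates of $g$ outside $U$ by elements of $((g'_i)^{H_i})^{\ast k}$ alters $g$ only by an element of $N_0$, so $g\in(g'^G)^{\ast k}N_0\subseteq\gennorsubgrp{g'}N_0\subseteq N$.

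Granting this, the first assertion is quick. For $N$ between $N_0$ and $N^{\rm pr}$, Lemma~\ref{lem:ct_Npr_to_Nrk} gives $\ct(N)=\ct(N\setmeet N^{\rm rk})$, and $N\setmeet N^{\rm rk}$ is normal between $N_0$ and $N^{\rm rk}$, so I may assume $N\subgrpeq N^{\rm rk}$. Then $\ct(N)$ is nonempty since $\ct(1_G)=\nathom{\seq{0}}$, and if $r\in L$ with $r\leq\ct(g')$ for some $g'\in N$, then Remark~\ref{rmk:ct_Nrk_onto_L} supplies $g\in N^{\rm rk}$ with $\ct(g)=r$, and the absorption property puts $g\in N$; thus $r\in\ct(N)$, so $\ct(N)$ is an order ideal of $L$.

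For the second assertion I would first check that $\beta$ lands in the stated domain: for an order ideal $I$ of $L$ the set $\beta(I)=\set{g\in N^{\rm rk}}[\ct(g)\in I]$ is conjugation-invariant because $\ell^{\rm rk}$, hence $\ct$, is; contains $1_G$ because $\nathom{\seq{0}}$ is the least element of $L$ and so lies in $I$; is closed under inverses because $\ell^{\rm rk}(g^{-1})=\ell^{\rm rk}(g)$; and is closed under products because $\ell^{\rm rk}(g_ih_i)\leq\ell^{\rm rk}(g_i)+\ell^{\rm rk}(h_i)$ forces $\ct(gh)\leq\max\set{\ct(g),\ct(h)}\in I$ (using that $L$ is linearly ordered and $I$ downward closed); and plainly $N_0\subgrpeq\beta(I)\subgrpeq N^{\rm rk}$. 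Both $\alpha$ and $\beta$ are obviously monotone. For $\alpha\circ\beta=\id$: $\ct(\beta(I))\subseteq I$ is immediate, and conversely each $r\in I\subseteq L$ equals $\ct(g)$ for some $g\in N^{\rm rk}$ by Remark~\ref{rmk:ct_Nrk_onto_L}, and such $g$ lies in $\beta(I)$. For $\beta\circ\alpha=\id$: $N\subseteq\beta(\ct(N))$ is immediate, and the reverse inclusion — every $g\in N^{\rm rk}$ with $\ct(g)\in\ct(N)$ lies in $N$ — is exactly the absorption property. Since a monotone bijection of posets whose inverse is also monotone is an isomorphism of posets, $\alpha$ and $\beta$ are mutually inverse poset isomorphisms.

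The main obstacle is the absorption property, and within it the one substantive point is that the exponent $k$ can be taken \emph{uniform in $i$}: this is precisely what the relative bounded generation of Lemma~\ref{lem:rk_bd_gen_class_grps} delivers once the hypothesis $\ct(g)\leq\ct(g')$ makes the ratio $\ell^{\rm rk}(g_i)/\ell^{\rm rk}(g'_i)$ bounded along $\mathcal U$. The other delicate spot is the degenerate case $\ct(g')=\nathom{\seq{0}}$, where $g'_i$ may be central and Lemma~\ref{lem:rk_bd_gen_class_grps} does not apply; there one uses instead that $g'\in N^{\rm rk}$ collapses $g'$ into $N_0$, so the claim becomes trivial.
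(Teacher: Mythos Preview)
Your proof is correct and follows essentially the same route as the paper's: reduce to $N\subgrpeq N^{\rm rk}$ via Lemma~\ref{lem:ct_Npr_to_Nrk}, then use Lemma~\ref{lem:rk_bd_gen_class_grps} for the key implication (your ``absorption property'') and Remark~\ref{rmk:ct_Nrk_onto_L} for surjectivity. The paper compresses your absorption property into the phrase ``another straightforward application of Lemma~\ref{lem:rk_bd_gen_class_grps}''; you have unpacked it, including the observation that $\ell^{\rm pr}=\ell^{\rm rk}$ coordinatewise on $N^{\rm rk}$ (which is Proposition~2.13 of \cite{stolzthom2014lattice}), the degenerate case $\ct(g')=\nathom{\seq{0}}$, the uniformity of $k$, and the verification that $\beta(I)$ is genuinely a normal subgroup --- details the paper leaves implicit.
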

	
	\begin{proof}
		By Lemma~\ref{lem:ct_Npr_to_Nrk}, we may restrict to the case $N\subgrpeq N^{\rm rk}$. Clearly, $\ct(N)$ is not empty as $1_G\in N$. If $r\in\ct(N)$, Lemma~\ref{lem:rk_bd_gen_class_grps} implies that if $s\leq r$ then $s\in\ct(N)$. As $(L,\leq)$ was a linear order, $\ct(N)$ is an order ideal.
		
		Concerning the second part: Obviously, both maps are inclusion preserving.
		To show that $\beta\circ\alpha:N\mapsto\ct(N)\mapsto\set{g\in N^{\rm rk}}[\ct(g)\in\ct(N)]$ is the identity, i.e.~that $\ct(g)\in\ct(N)$ iff $g\in N$, is just another straightforward application of Lemma~\ref{lem:rk_bd_gen_class_grps}. The map $\alpha\circ\beta:I\mapsto\set{g\in N^{\rm rk}}[\ct(g)\in I]\mapsto\set{\ct(g)\in L}[\ct(g)\in I]$ is the identity by Remark~\ref{rmk:ct_Nrk_onto_L}.
	\end{proof}
	
	\begin{remark}\label{rmk:prop_ct}
		Similarly, one can prove that for $g\in N^{\rm rk}$ and $S\subseteq N^{\rm rk}$ it holds that $\ct(g)$ is in the order ideal of $(L,\leq)$ generated by $\ct(S)$ if and only if $g\in\gennorsubgrp{S}N_0$.
	\end{remark}
	
	\begin{corollary}
		The normal subgroups of $G$ lying between $N_0$ and $N^{\rm rk}$ are linearly ordered.
	\end{corollary}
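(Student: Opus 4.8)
The plan is to read this off directly from Lemma~\ref{lem:nor_subgrp_blw_Nrk} together with the fact that $(L,\leq)$ is a linear order. By that lemma, $\alpha:N\mapsto\ct(N)$ and $\beta:I\mapsto\set{g\in N^{\rm rk}}[\ct(g)\in I]$ are mutually inverse isomorphisms of posets between $\set{N\norsubgrpeq G}[N_0\subgrpeq N\subgrpeq N^{\rm rk}]$, ordered by inclusion, and the poset of order ideals of $(L,\leq)$, again ordered by inclusion. Since a poset isomorphism in particular preserves and reflects the relation $\subgrpeq$, it suffices to check that the poset of order ideals of the \emph{linearly} ordered set $(L,\leq)$ is itself linearly ordered by inclusion; the corollary then follows by transporting this along $\beta$.

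So the one point to verify is: \textbf{order ideals of a chain form a chain.} First I would recall that $(L,\leq)$ is a linear order: the quotient $(\mathcal L/\mathord\sim,\leq)$ was shown to be a linear order when it was introduced, and $L$ is a subset of it, hence inherits linearity. Then, given two order ideals $I,J$ of $(L,\leq)$, suppose $I\not\subseteq J$ and pick $x\in I\setminus J$. For an arbitrary $y\in J$, linearity gives $y\leq x$ or $x\leq y$; the latter is impossible, since $J$ is downward closed and would then force $x\in J$. Hence $y\leq x$, and as $I$ is downward closed, $y\in I$. Thus $J\subseteq I$, proving that any two order ideals are $\subseteq$-comparable.

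There is essentially no obstacle here: the real content was already absorbed into Lemma~\ref{lem:rk_bd_gen_class_grps} and Lemma~\ref{lem:nor_subgrp_blw_Nrk}, and this corollary is a purely formal consequence. I would keep the write-up to a sentence or two, citing Lemma~\ref{lem:nor_subgrp_blw_Nrk} for the poset isomorphism and noting that the order ideals of the chain $(L,\leq)$ are linearly ordered by the argument above, so that their preimages under $\beta$ — namely all normal subgroups of $G$ between $N_0$ and $N^{\rm rk}$ — are linearly ordered as well.
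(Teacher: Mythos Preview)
Your proposal is correct and follows exactly the paper's approach: invoke the poset isomorphism of Lemma~\ref{lem:nor_subgrp_blw_Nrk} and use that the order ideals of the chain $(L,\leq)$ are linearly ordered by inclusion. The only difference is that you spell out the elementary verification that ideals of a chain form a chain, which the paper leaves implicit.
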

	
	\begin{proof}
		This holds by the correspondence of Lemma~\ref{lem:nor_subgrp_blw_Nrk} as $(L,\leq)$ is linear and so its ideals are linearly ordered by inclusion.
	\end{proof}
	
	\begin{corollary}\label{cor:blw_Nrk_perf}
		For $N\norsubgrpeq N^{\rm rk}$ containing $N_0$ it holds that $N$ is perfect, i.e.~$N=N'$.
	\end{corollary}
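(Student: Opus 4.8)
The plan is to read this off from the order-ideal correspondence of Lemma~\ref{lem:nor_subgrp_blw_Nrk}, combined with the commutator construction from the proof of Lemma~\ref{lem:ct_Npr_to_Nrk}. Let $N\norsubgrpeq G$ with $N_0\subgrpeq N\subgrpeq N^{\rm rk}$. First I would record that $N_0$ itself is perfect: it is the directed union $\bigcup_{U\in\mathcal U}\prod_{i\notin U}H_i$, and since the commutator width of finite quasisimple classical groups is bounded by an absolute constant, each factor $\prod_{i\notin U}H_i$ is perfect; a directed union of perfect groups is perfect, so $N_0=N_0'$. Hence $N'$ is again a normal subgroup of $G$ with $N_0=N_0'\subgrpeq N'\subgrpeq N\subgrpeq N^{\rm rk}$, so that both $N$ and $N'$ lie in the domain of the poset isomorphism $\alpha:M\mapsto\ct(M)$ of Lemma~\ref{lem:nor_subgrp_blw_Nrk}. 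As $\alpha$ is injective and $\ct(N')\subseteq\ct(N)$ is clear, it is enough to prove $\ct(N)\subseteq\ct(N')$, i.e.\ that every $g\in N$ has the same convergence type as some element of $N'$; and for $g\in N_0$ this is trivial, since then $\ct(g)=\nathom{\seq{0}}$ and $N_0\subgrpeq N'$.

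So assume $g=\seq{g_i}\in N\setminus N_0$, so that $g_i\neq 1_{H_i}$ along $\mathcal U$; as $g\in N^{\rm rk}$ we may pass to $U\in\mathcal U$ on which $\ell^{\rm rk}(g_i)<1/8$ (in particular $g_i\notin\centersubgrp(H_i)$). I would then run the construction from the proof of Lemma~\ref{lem:ct_Npr_to_Nrk} with the scalar $\lambda_i=1$: with $U_i\defeq\ker(1-g_i)$ and $l_i\defeq\dim U_i$, Lemma~\ref{lem:subsp_to_nonsing_subsp} produces a non-singular $W_i\subspeq U_i$ with $\dim W_i\geq 2l_i-n_i>n_i/2$, which after a small modification may be taken so that $2\leq\dim W_i^\perp\leq 2\rk(1-g_i)$, and then Lemma~\ref{lem:inv_non-sing_subsp} produces an orthogonal decomposition $W_i=W_i^1\oplus W_i^2$ together with $h_i\in H_i$ that is the identity on $W_i^2$ and interchanges $W_i^\perp$ and $W_i^1$ (in the linear case, read $W_i^\perp$ as an arbitrary direct complement throughout). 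With $h\defeq\seq{h_i}$, the computation in the proof of Lemma~\ref{lem:ct_Npr_to_Nrk} shows that $\commutator{g_i,h_i}$ is the identity on $W_i^2$, acts as $g_i^{-1}$ on $W_i^\perp$ and as $g_i^{h_i}$ on $W_i^1$; in particular $\rk(1-h_i)\leq 2\dim W_i^\perp\leq 4\rk(1-g_i)$ and $\rk(1-\commutator{g_i,h_i})=2\rk(1-g_i)$ for $i\in U$.

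It remains to cash this in. From $\rk(1-h_i)\leq 4\rk(1-g_i)$ we get $\ell^{\rm rk}(h_i)\leq 4\ell^{\rm rk}(g_i)$, hence $h\in N^{\rm rk}$; using that $\ell^{\rm pr}(x)=\ell^{\rm rk}(x)$ whenever $\ell^{\rm rk}(x)\leq 1/2$ (Proposition~2.13 of \cite{stolzthom2014lattice}), this gives $\ct(h)\leq\ct(g)$, and since $\ct(g)\in\ct(N)$ and $\ct(N)$ is an order ideal we get $\ct(h)\in\ct(N)$, so $h\in N$ by the identity $\beta\circ\alpha=\id$ of Lemma~\ref{lem:nor_subgrp_blw_Nrk}. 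Therefore $\commutator{g,h}\in N'$. Finally $\ell^{\rm rk}(\commutator{g_i,h_i})=2\ell^{\rm rk}(g_i)<1/2$, so Proposition~2.13 of \cite{stolzthom2014lattice} gives $\ell^{\rm pr}(\commutator{g_i,h_i})=2\ell^{\rm rk}(g_i)$ on $U$, whence $\ct(\commutator{g,h})=\nathom{\seq{2\ell^{\rm rk}(g_i)}}=\nathom{\seq{\ell^{\rm rk}(g_i)}}=\ct(g)$. Thus $\ct(g)\in\ct(N')$, as needed. I expect the only delicate points to be the bounded-commutator-width input used for $N_0$, and the requirement that the auxiliary $h$ lie inside $N$ rather than merely inside $G$: this is exactly why its rank must be controlled by that of $g$, so that $\commutator{g,h}$ is a genuine commutator of elements of $N$.
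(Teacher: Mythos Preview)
Your proof is correct and follows essentially the same route as the paper's: both reduce to showing $\ct(N')=\ct(N)$ via the commutator construction of Lemma~\ref{lem:ct_Npr_to_Nrk}, bound $\ct(h)\leq\ct(g)$ so that $h\in N$, and handle $N_0\subseteq N'$ by the perfectness of $N_0$. The only cosmetic differences are that the paper invokes Remark~\ref{rmk:prop_ct} instead of $\beta\circ\alpha=\id$ to place $h$ in $N$, and derives the bounded commutator width of the $H_i$ from Lemma~\ref{lem:bd_gen} rather than citing it as a known fact.
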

	
	\begin{proof}
		Pick $g\in N$ and proceed as in the proof of Lemma~\ref{lem:ct_Npr_to_Nrk}. The construction of $h$ shows that $\ct(h)\leq\ct(g)$, so Remark~\ref{rmk:prop_ct} implies $h\in\gennorsubgrp{g} N_0\subseteq N$. Hence the commutator $\commutator{g,h}$ lies in $N'$ and $\ct(\commutator{g,h})=\ct(g)$, so $\ct(N')=\ct(N)$.
		To apply Lemma~\ref{lem:nor_subgrp_blw_Nrk} to deduce that $N=N'$, we still need that $N_0\subgrpeq N'$. It is therefore enough to show that $N_0$ is perfect. This follows from the following simple application of Lemma~\ref{lem:bd_gen}: Let $h\in H$ be an element with $\ell^{\rm pr}(h)\geq\varepsilon$ for some fixed $\varepsilon>0$, where $H$ is one of the $H_i$. Then the previously mentioned result implies that $H'\supseteq(h^H)^{\ast k}h^{-k}=H$ for a fixed integer $k$ only depending on $\varepsilon$. Hence, if $g=\seq{g_i}\in N_0$, applying this in every component $i$ for which $g_i\neq 1_{H_i}$ yields that $g\in N_0'$ as wished.
	\end{proof}
	
	\begin{remark}
		The argument at the end of the proof also shows that $G$ itself is perfect.
	\end{remark}
	
	\begin{remark}\label{rmk:Nrk/N_0_char_sgrp}
		Especially, this implies that for $N\subgrpeq N^{\rm pr}$ it holds that $N\setmeet N^{\rm rk}=\commutator{G,N}= N'$ by the end of Lemma~\ref{lem:char_cent_of_G/Nrk} and Corollary~\ref{cor:blw_Nrk_perf}. Hence $N^{\rm rk}=(N^{\rm pr})'$ and so $N^{\rm rk}/N_0$ is characteristic in $G/N_0$ as well.
	\end{remark}
	
	Using the correspondence of Lemma~\ref{lem:nor_subgrp_blw_Nrk}, we introduce the normal subgroup $N_1$ as the normal subgroup of $N^{\rm rk}$ such that $\ct(N_1)=I_1$, where 
	$I_1\defeq\set{\nathom{\seq{r_i}}\in L}[n_ir_i<C\text{ for some } C>0\text{ along }\mathcal U]$.
	It is obvious that $I_1$ covers the trivial ideal $I_0=\set{\nathom{\seq{0}}}$ corresponding to $N_0$, i.e.~there is no ideal properly between them. Hence $N_1$ covers $N_0$. Let us mention here that for $M,N$ normal subgroups of $G$ between $N_0$ and $N^{\rm rk}$ it holds that $M$ is covered by $N$ if and only if the corresponding order ideals $I$ and $J$ of $(L,\leq)$ are of the form $I=\set{r\in L}[r<s]$ and $J=\set{r\in L}[r\leq s]$ for some $s\neq\nathom{\seq{0}}$.
	
	Because we will need them later, we introduce the normal subgroups $A_0\defeq\set{g \in N^{\rm pr}}[\ct(g)\in I_0]$ and $A_1\defeq\set{g \in N^{\rm pr}}[\ct(g)\in I_1]$.
	
	Now we complete the picture of the lattice of normal subgroups of $G/N_0$ by looking at an arbitrary $N\norsubgrpeq N^{\rm pr}$ with $N_0\subgrpeq N$.
	
	Let us at first assume that $N_1\subgrpeq N$. Then the following lemma applies:

	\begin{lemma}\label{lem:lat_abv_N_1}
		Define mappings between the following two sets
		$$
		\begin{tikzcd}
			& \set{N\norsubgrpeq G}[N_1\subgrpeq N\subgrpeq N^{\rm pr}] \arrow[d,xshift=0.7ex,"\Phi"] &\\
			& \set{M}[M\norsubgrpeq G,\ N_1\subgrpeq M\subgrpeq N^{\rm rk}]\times\set{A}[A\norsubgrpeq G,\ N_1\subgrpeq A\subgrpeq A_1] \arrow[u,xshift=-0.7ex,"\Psi"] &
		\end{tikzcd}
		$$
		by $\Phi:N\mapsto(N\setmeet N^{\rm rk}, N\setmeet A_1)$ and $\Psi:(M,A)\mapsto MA$.	Then $\Phi$ and $\Psi$ are isomorphisms of posets and mutually inverse to each other.
	\end{lemma}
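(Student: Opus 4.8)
The plan is to show that $\Phi$ and $\Psi$ are well-defined, inclusion-preserving, and mutually inverse. I would first check that $\Psi$ lands in the correct set: given $M$ normal in $G$ with $N_1 \subgrpeq M \subgrpeq N^{\rm rk}$ and $A$ normal in $G$ with $N_1 \subgrpeq A \subgrpeq A_1$, the product $MA$ is a normal subgroup of $G$ (it is normal since both factors are, and it is a subgroup since normal subgroups permute). Because $A_1 \subgrpeq N^{\rm pr}$ and $M \subgrpeq N^{\rm rk} \subgrpeq N^{\rm pr}$, we get $MA \subgrpeq N^{\rm pr}$, and clearly $N_1 \subgrpeq MA$. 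That $\Phi$ is well-defined is similar: $N \setmeet N^{\rm rk}$ is normal and lies between $N_1$ (using $N_1 \subgrpeq N$ and $N_1 \subgrpeq N^{\rm rk}$) and $N^{\rm rk}$, and likewise $N \setmeet A_1$ lies between $N_1$ and $A_1$. Both maps are obviously order-preserving on each coordinate.

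The crux is showing $\Psi \circ \Phi = \id$, i.e.\ that for $N$ with $N_1 \subgrpeq N \subgrpeq N^{\rm pr}$ we have $N = (N \setmeet N^{\rm rk})(N \setmeet A_1)$. The inclusion $\supseteq$ is immediate. For $\subseteq$, take $g = \seq{g_i} \in N$ and consider $\varphi(g) \in Z = \prod_{\mathcal U} S(H_i)$ from Lemma~\ref{lem:char_cent_of_G/Nrk}. The idea is to peel off the quasiscalar part: using Lemma~\ref{lem:all_qsi_scal_attained} one can build an element $a = \seq{a_i} \in N^{\rm pr}$ with $\varphi(a) = \varphi(g)$ and with $a_i$ a diagonalizable element whose eigenvalue-$\lambda_i$ eigenspace has codimension $2$, so that $\ell^{\rm pr}(a_i) \leq 2/n_i$ and hence $\ct(a) \in I_1$, i.e.\ $a \in A_1$. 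The key point is that $a$ can in fact be taken inside $N$: since $\ct(a) \in I_1 = \ct(N_1)$ and $N_1 \subgrpeq N$, Remark~\ref{rmk:prop_ct} (applied to see that $a$ lies in the normal closure of a suitable element of $N_1$ together with $N_0$) gives $a \in N$; hence $a \in N \setmeet A_1$. Then $g a^{-1}$ has $\varphi(g a^{-1}) = 1$, so $g a^{-1} \in N^{\rm rk}$ by Lemma~\ref{lem:char_cent_of_G/Nrk}, and $g a^{-1} \in N$, so $g a^{-1} \in N \setmeet N^{\rm rk}$; thus $g = (g a^{-1}) a \in (N \setmeet N^{\rm rk})(N \setmeet A_1)$ as required. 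I expect a minor technical subtlety here in arranging $a$ to genuinely lie in $N$ rather than merely having the right convergence type — one must invoke the correspondence of Lemma~\ref{lem:nor_subgrp_blw_Nrk} (that a normal subgroup between $N_0$ and $N^{\rm rk}$ is determined by its order ideal) applied to $N \setmeet N^{\rm rk} \supseteq N_1$, which forces the element constructed from Lemma~\ref{lem:all_qsi_scal_attained} into $N \setmeet A_1$; this is where the hypothesis $N_1 \subgrpeq N$ is essential and is the main obstacle to get exactly right.

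For $\Phi \circ \Psi = \id$, I must show $(M A) \setmeet N^{\rm rk} = M$ and $(M A) \setmeet A_1 = A$. For the first, $\supseteq$ is clear; for $\subseteq$, if $x = m a \in N^{\rm rk}$ with $m \in M$, $a \in A$, then $\varphi(a) = \varphi(ma) = \varphi(x) = 1$ since $\varphi(m) = 1$ (as $M \subgrpeq N^{\rm rk}$), so $a \in N^{\rm rk}$; but then $\ct(a) \in I_1$ forces, via the correspondence of Lemma~\ref{lem:nor_subgrp_blw_Nrk} together with $a \in A$ and $N_1 \subgrpeq A$, that $a \in N_1 \subgrpeq M$, hence $x = ma \in M$. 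The computation $(MA) \setmeet A_1 = A$ is analogous, using that $A \subgrpeq A_1$ and that the $N^{\rm rk}$-part $M$ of an element of $A_1$ has convergence type in $I_1$, hence lies in $N_1 \subgrpeq A$. Finally, both $\Phi$ and $\Psi$ being order-preserving bijections with order-preserving inverses makes them isomorphisms of posets, completing the proof. The only real work is the quasiscalar-splitting argument above; everything else is bookkeeping with the $\varphi$ and $\ct$ invariants and the correspondences already established.
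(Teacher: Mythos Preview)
Your overall approach coincides with the paper's: the quasiscalar-splitting via Lemma~\ref{lem:all_qsi_scal_attained} for $\Psi\circ\Phi=\id$, and your elementwise verification of $\Phi\circ\Psi=\id$ is precisely Dedekind's modular law unpacked (the paper invokes Dedekind directly: $MA\setmeet N^{\rm rk}=M(A\setmeet N^{\rm rk})=MN_1=M$ since $M\subgrpeq N^{\rm rk}$ and $A\setmeet N^{\rm rk}\subgrpeq A_1\setmeet N^{\rm rk}=N_1\subgrpeq M$, and symmetrically).

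There is, however, a genuine circularity in your argument for $a\in N$. You invoke Remark~\ref{rmk:prop_ct} (or the correspondence of Lemma~\ref{lem:nor_subgrp_blw_Nrk}) on $a$ to conclude $a\in N$ from $\ct(a)\in I_1\subseteq\ct(N)$; but both results apply only to elements of $N^{\rm rk}$, and your $a$ is typically \emph{not} in $N^{\rm rk}$ (if $\lambda_i\neq 1$ along $\mathcal U$ then $\ell^{\rm rk}(a_i)$ is close to $1$). You then use $a\in N$ to get $ga^{-1}\in N$, which is circular. The fix is to reverse the order: first note $ga^{-1}\in N^{\rm rk}$ via $\varphi$, then estimate
\[
\ell^{\rm rk}(g_ia_i^{-1})\leq\ell^{\rm rk}(\lambda_i^{-1}g_i)+\ell^{\rm rk}(\lambda_i a_i^{-1})\leq\ell^{\rm pr}(g_i)+2/n_i,
\]
so that $\ct(ga^{-1})\leq\max\{\ct(g),\nathom{\seq{2/n_i}}\}\in\ct(N)$, where the containment uses $N_1\subgrpeq N$ to absorb the $2/n_i$ term. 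Now Lemma~\ref{lem:nor_subgrp_blw_Nrk} applies legitimately (since $ga^{-1}\in N^{\rm rk}$) and yields $ga^{-1}\in N\setmeet N^{\rm rk}$; only then does $a=(ga^{-1})^{-1}g\in N$ follow. The paper's own proof is terse at exactly this point, but this is the computation that makes the claimed decomposition valid.
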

	
	\begin{proof}
		Obviously, both maps are inclusion preserving. Moreover, $\Phi\circ\Psi(M,A)=\Phi(MA)=(MA\setmeet N^{\rm rk},MA\setmeet A_1)=(M(A\setmeet N^{\rm rk}),A(M\setmeet A_1)) $ by Dedekind's modular law. However, it is easy to see from the definitions that $A\setmeet N^{\rm rk}=M\setmeet A_1=N_1$, so $\Phi\circ\Psi=\id$ as $N_1\subgrpeq M,A$.
		
		Similarly, we compute $\Psi\circ\Phi(N)=\Psi(N\setmeet N^{\rm rk}, N\setmeet A_1)=(N\setmeet N^{\rm rk})(N\setmeet A_1)\subseteq N$. So it is enough to show that every element $g=\seq{g_i}\in N$ can be written as a product of elements from $N\setmeet N^{\rm rk}$ and $N\setmeet A_1$. Choose $\lambda_i$ ($i\in I$) such that $\ell^{\rm rk}(\lambda_i^{-1}g)=\ell^{\rm pr}(g_i)$. Let $h_i\in H_i$ be an element which has all but two diagonal entries equal to $\lambda_i$ (which exists by Lemma~\ref{lem:all_qsi_scal_attained}). Then setting $f\defeq\seq{g_ih_i^{-1}}$ and $h\defeq\seq{h_i}$, $g=fh$ is the desired product decomposition.
	\end{proof}
	
	\begin{remark}
		Essentially, Lemma~\ref{lem:lat_abv_N_1} (in combination with Lemma~\ref{lem:nor_subgrp_blw_Nrk}) says that the lattice of normal subgroups of $G/N_1$ is isomorphic to the product of the linear order of ideals of $(L,\leq)$ different from $I_0$ and the subgroup lattice of the abelian group $Z\iso N^{\rm pr}/N^{\rm rk}=N^{\rm rk}A_1/N^{\rm rk}\iso A_1/(A_1\setmeet N^{\rm rk})= A_1/N_1$ from Lemma~\ref{lem:char_cent_of_G/Nrk}. If $\Phi:N\mapsto(M,A)$, then $N\mapsto(\ct(M),\varphi(A))$ is the described isomorphism of lattices, where $\varphi$ is the map from Lemma~\ref{lem:char_cent_of_G/Nrk}.
		
		In this situation, $M=N'$ is the abelianization (by Remark~\ref{rmk:Nrk/N_0_char_sgrp} above) and $\varphi(A)=\varphi(MA)=\varphi(N)\iso N N^{\rm rk}/N^{\rm rk}\iso N/(N\setmeet N^{\rm rk})=N/N'$ is the universal abelian quotient.
	\end{remark}
	
	Now assume that $N_1\not\subgrpeq N$. Then it follows from Lemma~\ref{lem:ct_Npr_to_Nrk} that $\ct(N)=\ct(N\setmeet N^{\rm rk})=\ct(N_0)=\set{\nathom{\seq{0}}}=I_0$ (as by assumption $N_0\subgrpeq N$). Hence we have established the following result.
	
	\begin{lemma}\label{lem:lat_not_abv_N_1}
		If $N$ is normal in $G$ containing $N_0$ and $N_1\not\subgrpeq N$, then $N\subgrpeq A_0$.
	\end{lemma}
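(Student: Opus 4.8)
The plan is to bootstrap from the structural results already proved for the part of the lattice lying below $N^{\rm rk}$. Concretely, I would run the short chain of identifications $\ct(N)=\ct(N\setmeet N^{\rm rk})=\ct(N_0)=I_0$ and then simply unwind the definition of $A_0$.

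First I would observe that $N$ is necessarily a proper subgroup of $G$: since $N_1\subgrpeq N^{\rm rk}\subgrpeq N^{\rm pr}\subgrpeq G$, the hypothesis $N_1\not\subgrpeq N$ rules out $N=G$. By Lemma~\ref{lem:el_outside_Npr}, every proper normal subgroup of $G$ containing $N_0$ is contained in $N^{\rm pr}$, so in fact $N_0\subgrpeq N\subgrpeq N^{\rm pr}$. In particular $N$ meets the hypotheses of Lemma~\ref{lem:ct_Npr_to_Nrk}, which hands us $\ct(N)=\ct(N\setmeet N^{\rm rk})$.

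Next I would identify $N\setmeet N^{\rm rk}$. It is a normal subgroup of $G$ lying between $N_0$ and $N^{\rm rk}$, and $N_1\not\subgrpeq N\setmeet N^{\rm rk}$ (otherwise $N_1\subgrpeq N$). Now the normal subgroups of $G$ between $N_0$ and $N^{\rm rk}$ form a chain (the corollary following Lemma~\ref{lem:nor_subgrp_blw_Nrk}), and $N_1$ covers $N_0$ in it; hence any member $M$ of this chain with $M\supsetneq N_0$ is comparable to $N_1$, and in the case $M\subgrpeq N_1$ the covering property forces $M=N_1$, so in either case $N_1\subgrpeq M$. Applying this to $M=N\setmeet N^{\rm rk}$ and recalling $N_1\not\subgrpeq N\setmeet N^{\rm rk}$, we must have $N\setmeet N^{\rm rk}=N_0$. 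Since every element of $N_0$ is trivial along $\mathcal U$ and therefore has $\ell^{\rm pr}(\cdot)=0$ along $\mathcal U$, this gives $\ct(N\setmeet N^{\rm rk})=\ct(N_0)=\set{\nathom{\seq{0}}}=I_0$.

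Putting the two steps together yields $\ct(N)=I_0$, so every $g\in N\subgrpeq N^{\rm pr}$ satisfies $\ct(g)\in I_0$, which is exactly the membership $g\in A_0=\set{g\in N^{\rm pr}}[\ct(g)\in I_0]$; hence $N\subgrpeq A_0$. I do not expect a genuine obstacle here: the argument is essentially bookkeeping on top of Lemmas~\ref{lem:el_outside_Npr} and \ref{lem:ct_Npr_to_Nrk}, and the only mildly delicate point is the elementary order-theoretic step that, in a chain of normal subgroups in which $N_1$ covers $N_0$, every member strictly above $N_0$ already contains $N_1$.
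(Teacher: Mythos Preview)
Your proof is correct and follows essentially the same route as the paper: reduce to $\ct(N)=\ct(N\setmeet N^{\rm rk})$ via Lemma~\ref{lem:ct_Npr_to_Nrk}, use that $N_1$ covers $N_0$ in the linearly ordered lattice below $N^{\rm rk}$ to force $N\setmeet N^{\rm rk}=N_0$, and read off $N\subgrpeq A_0$ from the definition. The only addition you make is the explicit reduction $N\subgrpeq N^{\rm pr}$ via Lemma~\ref{lem:el_outside_Npr}; the paper takes this for granted since the surrounding discussion has already restricted to normal subgroups below $N^{\rm pr}$.
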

	
	Finally, we describe when a normal subgroup as in Lemma~\ref{lem:lat_not_abv_N_1} is contained in a normal subgroup as in Lemma~\ref{lem:lat_abv_N_1}:
	
	\begin{lemma}
		A normal subgroup $N$ containing $N_0$ but not $N_1$ is contained in a normal subgroup $K\subgrpeq N^{\rm pr}$ containing $N_1$ if and only if $N\subgrpeq A$ (or equivalently $\varphi(N)\subgrpeq\varphi(A)$, with $\varphi$ the map from Lemma~\ref{lem:char_cent_of_G/Nrk}), where $\Phi(K)=(M,A)$ is from Lemma~\ref{lem:lat_abv_N_1}.
	\end{lemma}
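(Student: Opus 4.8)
My plan is to deduce the result directly from facts already available, without new constructions. I would begin by recording four ingredients: $N\subgrpeq A_0$ (Lemma~\ref{lem:lat_not_abv_N_1}, which is the only place the hypotheses $N_0\subgrpeq N$ and $N_1\not\subgrpeq N$ get used); $A_0\subgrpeq A_1$ (immediate from $I_0\subseteq I_1$); $N^{\rm rk}\setmeet A_1=N_1$ (this is the computation carried out inside the proof of Lemma~\ref{lem:lat_abv_N_1}, specialized to $A=A_1$); and $\ker\varphi=N^{\rm rk}$ (Lemma~\ref{lem:char_cent_of_G/Nrk}).

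Granting these, the equivalence $N\subgrpeq K\iff N\subgrpeq A$ is short. One direction is trivial because $A=K\setmeet A_1\subgrpeq K$. For the other, if $N\subgrpeq K$ then, since also $N\subgrpeq A_0\subgrpeq A_1$, I obtain $N\subgrpeq K\setmeet A_1=A$.

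Next I would treat the reformulation $N\subgrpeq A\iff\varphi(N)\subgrpeq\varphi(A)$. The forward implication holds simply because $\varphi$ is a homomorphism. For the converse, assuming $\varphi(N)\subgrpeq\varphi(A)$, I would take an arbitrary $g\in N$, choose $a\in A$ with $\varphi(a)=\varphi(g)$, and note that $ga^{-1}\in\ker\varphi=N^{\rm rk}$ while at the same time $ga^{-1}\in A_1$ (because $g\in N\subgrpeq A_1$ and $a\in A\subgrpeq A_1$). Hence $ga^{-1}\in N^{\rm rk}\setmeet A_1=N_1\subgrpeq A$, so $g=(ga^{-1})a\in A$, giving $N\subgrpeq A$.

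There is no serious obstacle here. The one identity that merits a second look is $N^{\rm rk}\setmeet A_1=N_1$: it rests on $\rest{\ct}_{N^{\rm rk}}$ being onto $L$ (Remark~\ref{rmk:ct_Nrk_onto_L}) together with the order-ideal correspondence of Lemma~\ref{lem:nor_subgrp_blw_Nrk}, under which $N_1$ matches precisely the ideal $I_1$ used to define $A_1$; thus the intersection of $A_1$ with $N^{\rm rk}$ is the normal subgroup of $N^{\rm rk}$ attached to $I_1$, namely $N_1$.
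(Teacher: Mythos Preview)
Your proof is correct and follows essentially the same route as the paper. The only cosmetic difference is in the converse direction $\varphi(N)\subgrpeq\varphi(A)\Rightarrow N\subgrpeq A$: the paper passes through $NN^{\rm rk}\subgrpeq AN^{\rm rk}$ and applies Dedekind's modular law to intersect with $A_1$, whereas you unwind the same computation element-wise by writing $g=(ga^{-1})a$ with $ga^{-1}\in N^{\rm rk}\setmeet A_1=N_1$; both arguments rest on exactly the same identity $N^{\rm rk}\setmeet A_1=N_1$.
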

	
	\begin{proof}
		Assume $N\subgrpeq K$. Then by Lemma~\ref{lem:lat_not_abv_N_1} it holds that $N\subgrpeq K\setmeet A_0\subgrpeq K\setmeet A_1=A$ implying $\varphi(N)\subgrpeq\varphi(A)$. Conversely, by Lemma~\ref{lem:char_cent_of_G/Nrk}, the assumption $\varphi(N)\subgrpeq\varphi(A)$ implies $NN^{\rm rk}\subgrpeq AN^{\rm rk}$. Thus $NN^{\rm rk}\setmeet A_1=N(N^{\rm rk}\setmeet A_1)=NN_1\subgrpeq AN^{\rm rk}\setmeet A_1=A(N^{\rm rk}\setmeet A_1)=AN_1=A$ by Dedekind's modular law. So $N\subgrpeq A\subgrpeq K$, completing the proof.
	\end{proof}
	
	We will end the article with a few remarks.

	\begin{remark} \label{end}
		Again, we have not yet covered the case that the $H_i$ ($i\in I$) are double covers of $\Omega^\pm_{2m}(q)$ or $\Omega_{2m+1}(q)$ ($q$ odd). In this case, define $N_0$ as above and let $A_0$ be the elements of $G=\prod_i{H_i}$ which are central along the ultrafilter $\mathcal U$. Then $G/N_0$ is a twofold cover of an ultraproduct $G/M_0$ of orthogonal groups with kernel $M_0/N_0\iso C_2$.

		It follows from Remark~\ref{rmk:orth_case_dbl_cover_rel_bd_gen} that if $N_0\subgrpeq N\norsubgrpeq G$ contains an element non-central modulo $N_0$, i.e.~it is not contained in $A_0$, then $M_0\subgrpeq N$, so it corresponds to a normal subgroup of $G/M_0$, which we have described.
		
		Finally, when $N_0\subgrpeq N\subgrpeq A_0$ with $M_0\not\subgrpeq N$ and $M_0\subgrpeq K\subgrpeq G$, then $N\subgrpeq K$ iff $NM_0\subgrpeq K$ and this again can be decided by considering the lattice of normal subgroups of $G/M_0$, which we discussed.
		
		Using the same argument, one can show that the normal subgroups of an ultraproduct of the double covers of simple alternating groups are still linearly ordered by inclusion (since here $A_0=M_0$).
	\end{remark}
	
	\section*{Acknowledgments}
	
	The results of this article are part of the PhD thesis of the first author. This research was supported by ERC Consolidator Grant No.\ 681207.
	
	\begin{bibdiv}
		\begin{biblist}
			\bib{dowerkthom2015bounded}{article}{
				title={Bounded Normal Generation and Invariant Automatic Continuity},
				author={Dowerk, Philip},
				author={Thom, Andreas},
				journal={arXiv:1506.08549},
				year={2015},
				status={submitted}
			}
			\bib{liebeckshalev2001diameters}{article}{
				title={Diameters of finite simple groups: sharp bounds and applications},
				author={Liebeck, Martin W.},
				author={Shalev, Aner},
				journal={Annals of Mathematics},
				pages={383--406},
				year={2001},
				publisher={JSTOR}
			}
			\bib{stolzthom2014lattice}{article}{
				title={On the lattice of normal subgroups in ultraproducts of compact simple groups},
				author={Stolz, Abel},
				author={Thom, Andreas},
				journal={Proceedings of the London Mathematical Society},
				volume={108},
				number={1},
				pages={73--102},
				year={2014},
				publisher={Oxford University Press}
			}
			\bib{wilson2009finite}{book}{
				title={The finite simple groups},
				author={Wilson, Robert},
				volume={251},
				year={2009},
				publisher={Springer Science \& Business Media}
			}
		\end{biblist}
	\end{bibdiv} 
\end{document}